\theoremstyle{plain}
\newtheorem{theorem}{Theorem}[section]
\newtheorem{lemma}[theorem]{Lemma}
\newtheorem{proposition}[theorem]{Proposition}
\theoremstyle{definition}
\newtheorem{definition}[theorem]{Definition}
\theoremstyle{remark}
\numberwithin{equation}{section}
\begin{document}
\title{An Uniqueness Theorem on the Eigenvalues\\of Spherically Symmetric Interior Transmission
Problem\\in Absorbing Medium}
\author{Lung-Hui Chen$^1$}\maketitle\footnotetext[1]{Department of
Mathematics, National Chung Cheng University, 168 University Rd.
Min-Hsiung, Chia-Yi County 621, Taiwan. Email:
mr.lunghuichen@gmail.com;\,lhchen@math.ccu.edu.tw. Fax:
886-5-2720497.}
\begin{abstract}
We study the asymptotic distribution of the eigenvalues of
interior transmission problem in absorbing medium. We apply
Cartwright's theory and the technique from entire function theory to find a Weyl's type of density theorem in absorbing medium. Given a sufficient quantity of
transmission eigenvalues, we obtain limited uniqueness on the
refraction index as an uniqueness problem in entire function
theory.
\\MSC:35P25/35R30/34B24/.
\\Keywords: absorbing media/inverse problem/transmission eigenvalues/Cartwright's theory/almost periodic function.
\end{abstract}
\section{Preliminaries}
In this note, we study the eigenvalues of the interior
transmission problem with a twice differentiable absorbing
refraction index
\begin{equation}
n_1(x):=\epsilon_1(x)+i\frac{\gamma_1(x)}{k}:
\end{equation}
\begin{eqnarray}\label{1.1}
\left\{%
\begin{array}{ll}
    \Delta w+k^2(\epsilon_1(x)+i\frac{\gamma_1(x)}{k})w=0,  & \hbox{ in }B; \\
    \Delta v+k^2(\epsilon_0+i\frac{\gamma_0}{k})v=0 & \hbox{ in }B; \\
    w=v, & \hbox{ on }\partial B; \\
    \frac{\partial w}{\partial r}=\frac{\partial v}{\partial r}& \hbox{ on }\partial B,\\
\end{array}%
\right.
\end{eqnarray}
where $r:=|x|$ and $B:=\{|x|\leq 1,\,x\in\mathbb{R}^3\}$, $w,v\in L^2(B)$,
$w-v\in H^2_0(B)$, $k\in\mathbb{C}$.  We consider the spherical
perturbations for~(\ref{1.1}) by setting
$\epsilon_1(x)=\epsilon_1(r)>0$ and $\gamma_1(x)=\gamma_1(r)>0$,
$\forall r\in[0,1]$; $\epsilon_0$ and $\gamma_0$ are positive
constants and $n_1(r)=\epsilon_0+i\frac{\gamma_0}{k}$, when $r\geq1$.
\par
The interior transmission eigenvalues play a role in the inverse
scattering theory both in numerical computation and in theoretical
purpose. See Colton and Monk \cite{Colton}, Colton and Kress \cite{Colton2} and Colton,
P\"{a}iv\"{a}rinta and Sylvester \cite{Colton3} for the historic
and theoretical context. Moreover, the eigenvalues of the interior
transmission problem is directly connected to the zeros of
scattering amplitude. They are zeros of the integral average of
the scattering amplitude. We refer to McLaughlin and Polyakov
\cite{Mc}. Moreover, It is another research interest to study the distribution of interior transmission eigenvalues in $\mathbb{C}$ \cite{Cakoni,Cakoni2,Colton,Colton2,Kirsch,Mc}. It is expected to prove a Weyl's type of asymptotics on these eigenvalues. In this paper we use the analysis in Levin \cite{Levin} to discuss the zeros of an asymptotically almost periodic function
along the real axis.

\par
Let us consider the solutions of~(\ref{1.1}) of the
following form:
\begin{eqnarray}\label{1.2}
&&v(r)=c_1j_0(k\tilde{n}_0r);\\
&&w(r)=c_2\frac{y(r)}{r}, \label{1.3}
\end{eqnarray}
where
$\tilde{n}_0:=(\epsilon_0+i\frac{\gamma_0}{k})^{\frac{1}{2}}$,
$j_0$ is a spherical Bessel function of order zero and $y(r)$ is a
solution of
\begin{eqnarray}\label{15}
\left\{
  \begin{array}{ll}
    y''+k^2(\epsilon_1(r)+i\frac{\gamma_1(r)}{k})y=0; \vspace{5pt}\\
    y(0)=0, \, y'(0)=1.
  \end{array}
\right.
\end{eqnarray}
The existence of $c_1$, $c_2$ in~(\ref{1.2}),~(\ref{1.3}) is provided by
\begin{equation}
D(k):=\det\left(%
\begin{array}{cc}
  y(1) & -j_0(k\tilde{n}_0) \\
  \{\frac{y(r)}{r}\}'|_{r=1} & -j_0'(k\tilde{n}_0r)|_{r=1} \\
\end{array}%
\right)=0. \label{1.7}
\end{equation}
The computation on  $c_1$, $c_2$ is discussed in \cite{Colton,Mc}.
Thus, the interior transmission eigenvalues are the zeros of such a
functional determinant. Furthermore, it is well-known that $D(k)$
is an entire function of exponential type bounded on the real
axis. The analysis of $D(k)$ is found in \cite[Sec.\,3]{Cakoni}. When we set $a=1$
 in the setting in \cite{Cakoni},
\begin{equation}\label{188}
D(k)=\frac{1}{ik[\epsilon_1(0)\epsilon_0]^{\frac{1}{4}}}\sinh[ik\sqrt{\epsilon_0}
-ik\int_0^1\sqrt{\epsilon_1(\rho)}d\rho
-\frac{\gamma_0}{2\sqrt{\epsilon_0}}+\frac{1}{2}\int_0^1
\frac{\gamma_1(\rho)}{\sqrt{\epsilon_1(\rho)}}d\rho]+O(\frac{1}{k^2}),
\end{equation}
as $k\rightarrow\pm\infty$ on $0i+\mathbb{R}$.
Following this, we will show $D(k)$ is in Cartwright's class of functions. Such a
function has many advantage. We refer to Levin
\cite{Levin,Levin2} for an introduction.

\par
Alternatively, we consider the other theoretical framework \cite{Aktosun}. Because~(\ref{1.1}) is spherically symmetric,  we define
the transformation:
\begin{eqnarray}\label{19}
&&\Phi(r;k):=rw(r);\vspace{5pt}\\
&&\Phi_0(r;k):=rv(r).\label{20}
\end{eqnarray}
Therefore,~(\ref{1.1}) is transformed into the following system.
\begin{eqnarray} \label{1.9}
             \left\{%
\begin{array}{ll}
    \Phi''+k^2(\epsilon_1(r)+i\frac{\gamma_1(r)}{k})\Phi=0, & 0\leq r\leq1; \vspace{5pt} \\
    \Phi_0''+k^2(\epsilon_0+i\frac{\gamma_0}{k})\Phi_0=0, & 0\leq r\leq1; \vspace{5pt} \\
    \Phi(0)=\Phi_0(0)=0,\,\Phi(1)=\Phi_0(1),\,\Phi'(1)=\Phi_0'(1).\\
\end{array}%
\right.
\end{eqnarray}
By straightforward computation under~(\ref{19}) and~(\ref{20}),~(\ref{1.9}) is equivalent to the follwing system.
\begin{eqnarray}\label{1.8}
\left\{%
\begin{array}{ll}
    \Phi''+k^2(\epsilon_1(r)+i\frac{\gamma_1(r)}{k})\Phi=0, & 0\leq r\leq1;\vspace{5pt} \\
    \Phi(0)=0,\,\Phi_0'(1)\Phi(1)-\Phi_0(1)\Phi'(1)=0.\\
\end{array}%
\right.
\end{eqnarray}
The boundary condition of~(\ref{1.9}) or~(\ref{1.8}) implies each other.
The determinant in~(\ref{1.8}) is equivalent to~(\ref{1.7}) as well.
The zeros of the functional determinant $D(k)$ are then the
eigenvalues of~(\ref{1.8}). In this paper, we assume that two possibly different refraction indices have the same $\Phi_0(r;k)$.
In general, we note that 
\begin{eqnarray}
&&c_1=\frac{\det\left(
            \begin{array}{cc}
              y(1) & \frac{e^{ikr}}{r}|_{r=1} \\
              \{\frac{y(r)}{r}\}'|_{r=1} & \{\frac{e^{ikr}}{r}\}'|_{r=1} \\
            \end{array}
          \right)}{D(k)};\label{112}\\
&&c_2=\frac{\det\left(
            \begin{array}{cc}
              \frac{e^{ikr}}{r}|_{r=1} & -j_0(k\tilde{n}_0)\\
              \{\frac{e^{ikr}}{r}\}'|_{r=1} &  -j_0'(k\tilde{n}_0r)|_{r=1} \\
            \end{array}
          \right)}{D(k)}.\label{113}
\end{eqnarray}
 
\par To understand the analytic behavior
of the determinant $D(k)$, we  study the asymptotic solution
of~(\ref{15}). In this case, we use the theory
provided in Erdelyi \cite[p. 84]{Erdelyi}. In particular, we have a set
of fundamental solutions $y_1(r)$, $y_2(r)$ such that in a
sectorial region $S$
\begin{eqnarray}
&&y_j(r;k)=Y_j(r)[1+O(\frac{1}{k})];\label{110}\\
&&y_j'(r;k)=Y_j'(r)[1+O(\frac{1}{k})],\label{111}
\end{eqnarray}
as $|k|\rightarrow\infty$ in $S$, uniformly for $0\leq r\leq 1$
and for $\arg k$, where
\begin{equation}
Y_j(r)=\exp\{\beta_{0j}k+\beta_{1j}\},
\end{equation}
where $\beta_{0j}$, $\beta_{1j}$ satisfy
\begin{eqnarray}
&&(\beta_{0j}')^2+\epsilon_1(r)=0;\\
&&2\beta_{0j}'\beta_{1j}'+i\gamma_1+\beta''_{0j}=0.
\end{eqnarray}
\begin{eqnarray}
&&\beta_{0j}(r)=\pm
i\int_0^r\sqrt{\epsilon_1(\rho)}d\rho+E;\\
&&\beta_{1j}(r)=\mp\frac{1}{2}\int_0^r\frac{\gamma_1(\rho)}{\sqrt{\epsilon_1(\rho)}}d\rho
+\ln[\epsilon_1(r)]^\frac{-1}{4}+F,
\end{eqnarray}
where $E$, $F$ are constants. The sectorial region
$S\subset\mathbb{C}$ is characterized by the condition
\begin{equation}
\Re\{ki(\epsilon_1(r))^{\frac{1}{2}}\}\neq0.
\end{equation}
That is
\begin{equation}
S=\{k\in\mathbb{C}|\,\Im k\neq0\}.
\end{equation}
Therefore, any solution to~(\ref{15}) is of the form
\begin{equation}
y(r;k)=\alpha Y_1(r)[1+O(\frac{1}{k})]+\beta
Y_2(r)[1+O(\frac{1}{k})].
\end{equation}
We use the initial condition in~(\ref{15}) to obtain
\begin{eqnarray}\nonumber
y(r;k)&=&\frac{1}{2ik[\epsilon_1(0)\epsilon_1(r)]^{\frac{1}{4}}}
\exp\{ik\int_0^r\sqrt{\epsilon_1(\rho)}d\rho-\frac{1}{2}\int_0^r
\frac{\gamma_1(\rho)}{\sqrt{\epsilon_1(\rho)}}d\rho\}[1+O(\frac{1}{k})]\\&&
- \frac{1}{2ik[\epsilon_1(0)\epsilon_1(r)]^{\frac{1}{4}}}
\exp\{-ik\int_0^r\sqrt{\epsilon_1(\rho)}d\rho+\frac{1}{2}\int_0^r
\frac{\gamma_1(\rho)}{\sqrt{\epsilon_1(\rho)}}d\rho\}[1+O(\frac{1}{k})],
\label{1.18}
\end{eqnarray}
when $|k|\rightarrow\infty$ in $S$. Similarly, we use~(\ref{111})
to obtain the asymptotics
\begin{eqnarray}\nonumber
y'(r;k)&=&\frac{1}{2}[\frac{\epsilon_1(r)}{\epsilon_1(0)}]^\frac{1}{4}
\exp\{ik\int_0^r\sqrt{\epsilon_1(\rho)}d\rho-\frac{1}{2}\int_0^r
\frac{\gamma_1(\rho)}{\sqrt{\epsilon_1(\rho)}}d\rho\}[1+O(\frac{1}{k})]\\&&
+\frac{1}{2}[\frac{\epsilon_1(r)}{\epsilon_1(0)}]^\frac{1}{4}
\exp\{-ik\int_0^r\sqrt{\epsilon_1(\rho)}d\rho+\frac{1}{2}\int_0^r
\frac{\gamma_1(\rho)}{\sqrt{\epsilon_1(\rho)}}d\rho\}[1+O(\frac{1}{k})],
\label{1.19}
\end{eqnarray}when $|k|\rightarrow\infty$ in $S$.

Let us set
\begin{equation}\label{1.11}
A:=\sqrt{\epsilon_0},\, B:=\int_0^1\sqrt{\epsilon_1(\rho)}d\rho,\,
C:=\frac{1}{2}\frac{\gamma_0}{\sqrt{\epsilon_0}},\,
D:=\frac{1}{2}\int_0^1\frac{\gamma_1(\rho)}{\sqrt{\epsilon_1(\rho)}}d\rho.
\end{equation}
\par
When the refraction is purely real, $\gamma_1\equiv0$, the advantage is to consider the Liouville transformation of
$y(r)=y(r;k)$:
\begin{eqnarray}\label{L}
&z(\xi):=[n(r)]^{\frac{1}{4}}y(r),\mbox{ where
}\xi:=\int_0^r[n(\rho)]^{\frac{1}{2}}d\rho.
\end{eqnarray}
In particular, we define
\begin{equation}
B=  \int_0^1[n(\rho)]^{\frac{1}{2}}d\rho
\end{equation}
In this case,~(\ref{L}) becomes
\begin{eqnarray}
\left\{%
\begin{array}{ll} \label{17}
    z''+[k^2-p(\xi)]z=0,\,0<\xi<B;\vspace{5pt} \\
    z(0)=0;\,z'(0)=[n(0)]^{-\frac{1}{4}}. \\
\end{array}%
\right.
\end{eqnarray}
where
\begin{equation}
p(\xi):=\frac{n''(r)}{4[n(r)]^2}-\frac{5}{16}\frac{[n'(r)]^2}{[n(r)]^3}.
\end{equation}
From P\"{o}schel and Trubowitz \cite[p.16]{Po}, we review the
following asymptotics.
\begin{equation}\label{P1}
z(\xi;k)=\frac{\sin k\xi}{k}-\frac{\cos
k\xi}{2k^2}Q(\xi)+\frac{\sin
k\xi}{4k^3}[p(\xi)+p(0)-\frac{1}{2}Q^2(\xi)]+O(\frac{\exp[|\Im
k|\xi]}{k^4}),
\end{equation}
where $Q(\xi)=\int_0^\xi p(\xi)d\xi$. Similarly,
\begin{equation}\label{P2}
z'(\xi;k)= \cos k\xi+\frac{\sin k\xi}{2k}Q(\xi)+\frac{\cos
k\xi}{4k^2}[p(\xi)-p(0)-\frac{1}{2}Q^2(\xi)]+O(\frac{\exp[|\Im
k|\xi]}{k^3}).
\end{equation}
Before applying such asymptotics, we add a multiple
$[n(0)]^{\frac{1}{4}}$ to the solutions. We have to normalize the boundary condition in~(\ref{17}) for the solution.

We state the main result of this paper.
\begin{theorem}\label{11}
Let
\begin{eqnarray}
&&\Lambda_1:=\{z\in\mathbb{C}|\,|\arg(z)|<\epsilon\};\\
&&\Lambda_2:=\{z\in\mathbb{C}|\,|\arg(z)-\pi|<\epsilon\},\,\forall\epsilon>0.
\end{eqnarray}
Let $n_1^j$, $j=1,2$, be two unknown refraction indices and $D^j(k)$ be
the determinant corresponding to $n_1^j$ and $n_1^1(0)=n_1^2(0)$. If the zeros of $D^1(k)$
and $D^2(k)$ coincide in either $\Lambda_1$ or $\Lambda_2$, then
$\epsilon^1_1(r)\equiv \epsilon^2_1(r)$.
\end{theorem}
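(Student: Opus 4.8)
The plan is to put each $D^j(k)$ in Cartwright's class, to combine the coincidence of zeros with a symmetry of the problem so as to conclude that $D^1$ and $D^2$ agree as entire functions, and then to read off $\epsilon_1^j$ from the complete asymptotic expansion of $D^j$ along the real axis. First, from~(\ref{188}) together with the well-known boundedness of $D$ on $\mathbb{R}$, each $D(k)$ is entire of exponential type $|A-B|$ with indicator $h(\theta)=|A-B|\,|\sin\theta|$; its conjugate indicator diagram is the segment $[-i|A-B|,i|A-B|]$, so no exponential factor occurs in its Hadamard factorisation and $D(k)$ equals a constant multiple of the canonical product over its zeros (taken in the symmetric order that ensures convergence). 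Hence $D$ is determined by its zeros up to one multiplicative constant. Moreover, since $\epsilon_1,\gamma_1$ are real, (\ref{15}) gives $y(r;-\bar k)=\overline{y(r;k)}$ and $y'(r;-\bar k)=\overline{y'(r;k)}$, while $\tilde n_0(-\bar k)=\overline{\tilde n_0(k)}$ and $j_0$ is even with real Taylor coefficients; therefore $D(-\bar k)=\overline{D(k)}$, so the zero set of each $D^j$ is invariant under the reflection $k\mapsto-\bar k$.

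That reflection carries $\Lambda_1$ onto $\Lambda_2$, so the hypothesis ``the zeros of $D^1$ and $D^2$ coincide in $\Lambda_1$'' is equivalent to ``they coincide in $\Lambda_2$'', and either of them implies coincidence in $\Lambda_1\cup\Lambda_2$. By~(\ref{188}) every zero of $D^j$ of sufficiently large modulus lies within $O(1/|k|)$ of the horizontal line $\Im k=(D^j-C)/(A-B^j)$, hence in $\Lambda_1\cup\Lambda_2$; so $D^1$ and $D^2$ share all but finitely many of their zeros. Counting the zeros in $\Lambda_1$ — their number within radius $r$ is asymptotic to $\tfrac{|A-B^j|}{\pi}\,r$, a Weyl-type law furnished by Levin's theory — then forces $|A-B^1|=|A-B^2|$, so $D^1$ and $D^2$ have equal exponential type, and by the previous paragraph $D^1(k)=c\,R(k)\,D^2(k)$ with $c$ a constant and $R$ a rational function built from the finitely many non-shared zeros (among them, possibly, the zero at the origin).

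It remains to pass from this relation to $\epsilon_1^1\equiv\epsilon_1^2$. I would use the Liouville transformation (\ref{L})--(\ref{17}): although $n_1=\epsilon_1+i\gamma_1/k$ depends on $k$, one has $\xi=\int_0^r\sqrt{\epsilon_1(\rho)}\,d\rho+O(1/k)$ and transformed potential $p(\xi)+O(1/k)$ with $p$ built from $\epsilon_1$ alone, so the P\"oschel--Trubowitz expansions~(\ref{P1})--(\ref{P2}) yield a complete asymptotic expansion of $D^j(k)$ along $\mathbb{R}$ of the form $[\epsilon_1^j(0)\epsilon_0]^{-1/4}\bigl(e^{ik(A-B^j)}\sum_{\ell\ge1}\beta_\ell^j\,k^{-\ell}+e^{-ik(A-B^j)}\sum_{\ell\ge1}\overline{\beta_\ell^j}\,k^{-\ell}\bigr)$, in which $\epsilon_1^j$ controls $A-B^j$ and, through $Q,\,p(0),\,p(B),\dots$, every phase of the hierarchy and every ratio $\beta_\ell^j/\beta_1^j$, while $\gamma_1^j$ contributes only to the amplitudes. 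Substituting $D^1=c\,R\,D^2$, using $\epsilon_1^1(0)=\epsilon_1^2(0)$, and matching term by term forces $R\equiv1$ and $c=1$ (so $D^1\equiv D^2$), then $B^1=B^2$, and finally equality of all the spectral invariants of the two transformed impedance problems; the inverse-spectral argument for that problem, as in~\cite{Aktosun}, then yields $p^1\equiv p^2$, i.e.\ $\epsilon_1^1\equiv\epsilon_1^2$ on $[0,1]$. For the $\Lambda_2$ hypothesis one repeats everything after the reflection $k\mapsto-\bar k$.

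The main obstacle is this last step. The transmission-eigenvalue problem is not self-adjoint, so a single ``spectrum'' need not determine a potential — and indeed the theorem does \emph{not} assert $\gamma_1^1\equiv\gamma_1^2$. The point to establish is that in an absorbing medium the entire asymptotic phase structure of $D(k)$ is governed by $\epsilon_1$, the damping $\gamma_1$ entering only as a lower-order amplitude correction, so that matching the phases at every order of $1/k$ pins down $\epsilon_1$. A secondary technical difficulty is to make the Liouville reduction legitimate to all orders in $1/k$ in spite of the $k$-dependence of $n_1$, and to rule out the spurious rational factor $R$.
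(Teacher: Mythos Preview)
Your argument contains a basic error that derails the whole strategy: the exponential type of $D(k)$ is $A+B$, not $|A-B|$. The asymptotic~(\ref{188}) is valid only along the real axis; it does \emph{not} say that $D(k)\sim\tfrac{1}{k}\sinh[ik(A-B)+\cdots]$ globally. If you expand $D(k)$ from~(\ref{1.18})--(\ref{1.19}) and the Bessel asymptotics, the leading $1/k$ contribution is indeed $\sin[k(A-B)+\cdots]$, but the $O(1/k)$ corrections multiplying each exponential produce, at order $1/k^{2}$, genuine $e^{\pm ik(A+B)}$ terms. On the imaginary axis these dominate, and the paper's Proposition computing $h_D(\pm\pi/2)=A+B$ makes this precise. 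Consequently the Cartwright density of zeros of $D$ in each $\Lambda_i$ is $(A+B)/\pi$, not $|A-B|/\pi$, and your assertion that the large zeros lie within $O(1/|k|)$ of a single horizontal line is false. (With the correct type you would still obtain $B^{1}=B^{2}$ from equal densities, so that particular conclusion survives.)

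More importantly, your route---show $D^{1}=cR\,D^{2}$ and then read off $\epsilon_1$ from a full asymptotic expansion of $D$---misses the mechanism the paper actually exploits, and the step you flag as the ``main obstacle'' is not overcome. The paper never tries to match asymptotic phases of $D$. Instead it observes that at every common transmission eigenvalue $k_j$ the boundary matching forces $y^{1}(1;k_j)=y^{2}(1;k_j)$ and $(y^{1})'(1;k_j)=(y^{2})'(1;k_j)$; since the set $\{k_j\}$ has angular density $(A+B)/\pi$ while $y^{i}(1;\cdot)$ has type only $B$, Carlson's theorem (Theorem~\ref{Carlson}) applied to $F(k)=y^{1}(1;k)-y^{2}(1;k)$ gives $F\equiv 0$, and likewise for the derivative. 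It is this \emph{over-determination}---density $A+B$ of data versus type $B$ of the unknown---that drives the proof, and it has no analogue in your scheme. From $y^{1}(1;\cdot)\equiv y^{2}(1;\cdot)$ and $(y^{1})'(1;\cdot)\equiv (y^{2})'(1;\cdot)$ the paper then passes to the non-absorbing problem via a perturbation/stability argument (Proposition~\ref{35}) and a separate entire-function argument for $y^{i,0}$, finally invoking~\cite{Aktosun}. Your proposed extraction of $\epsilon_1$ directly from higher-order terms of $D$ would require disentangling $\epsilon_1$ from $\gamma_1$ at every order; note that already in~(\ref{188}) the quantity $D=\tfrac12\int_0^1\gamma_1/\sqrt{\epsilon_1}$ enters the \emph{phase}, not merely an amplitude, so the claim that ``$\gamma_1$ enters only as a lower-order amplitude correction'' is not correct as stated.
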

We use the vocabulary from entire function to describe the
distribution of the zeros of the functional determinant $D(k)$. We
refer such a theory to Levin \cite{Levin,Levin2}.
\begin{definition}
Let $f(z)$ be an entire function of order $\rho$. We use
$N(f,\alpha,\beta,r)$ to denote the number of the zeros of $f(x)$
inside the angle $[\alpha,\beta]$ and $|z|\leq r$; we define the
density function
\begin{equation}
\Delta_f(\alpha,\beta):=\lim_{r\rightarrow\infty}\frac{N(f,\alpha,\beta,r)}{r^{\rho}},
\end{equation}
and
\begin{equation}
\Delta_f(\beta):=\Delta_f(\alpha_0,\beta),
\end{equation}
with some fixed $\alpha_0\notin E$ such that $E$ is at most a
countable set.
\end{definition}
\begin{theorem}\label{13}
The determinant $D(k)$ is an entire function of order $1$ and of
type $A+B$. In particular,
\begin{equation}
\Delta_D(-\epsilon,\epsilon)=\Delta_D(\pi-\epsilon,\pi+\epsilon)=\frac{A+B}{\pi}.
\end{equation}
\end{theorem}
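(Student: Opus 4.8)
The plan is to identify $D(k)$ as a function of Cartwright's class, to determine its indicator, and then to extract the zero density from Levin's theory of entire functions of exponential type bounded on a line \cite{Levin}. One starts from the fact, recalled above, that $D$ is entire of exponential type, together with the boundedness of $D$ on $\mathbb{R}$ furnished by~(\ref{188}); then $\int_{\mathbb{R}}\frac{\log^{+}|D(x)|}{1+x^{2}}\,dx<\infty$, so $D$ belongs to Cartwright's class. Levin's theory yields at once that $D$ is of completely regular growth, that its indicator is sinusoidal, $h_{D}(\theta)=h_{D}(\tfrac{\pi}{2})\sin\theta$ for $\theta\in[0,\pi]$ and $h_{D}(\theta)=-h_{D}(-\tfrac{\pi}{2})\sin\theta$ for $\theta\in[-\pi,0]$, and that the zeros $\{\lambda_{n}\}$ of $D$ cluster along the real axis, with
\[
\#\{n:|\lambda_{n}|\le r\}=\frac{h_{D}(\tfrac{\pi}{2})+h_{D}(-\tfrac{\pi}{2})}{\pi}\,r+o(r).
\]
Since the coefficients of~(\ref{15}) and the polynomial $(k\tilde{n}_{0})^{2}=k^{2}\epsilon_{0}+ik\gamma_{0}$ are invariant under $k\mapsto-\bar{k}$ combined with complex conjugation, one also gets $D(-\bar{k})=\overline{D(k)}$, so the zero set is symmetric about the imaginary axis and $\Lambda_{1}$, $\Lambda_{2}$ receive equal shares; hence for every $\epsilon>0$,
\[
N(D,-\epsilon,\epsilon,r)=N(D,\pi-\epsilon,\pi+\epsilon,r)=\frac{h_{D}(\tfrac{\pi}{2})+h_{D}(-\tfrac{\pi}{2})}{2\pi}\,r+o(r).
\]
Everything therefore reduces to proving $h_{D}(\pm\tfrac{\pi}{2})=A+B$; in particular this will make the order exactly $1$.

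The inequality $h_{D}(\pm\tfrac{\pi}{2})\le A+B$ is the easy half: the first column of~(\ref{1.7}) consists of $y(1;k)$ and $\{y(r)/r\}'|_{r=1}=y'(1;k)-y(1;k)$, which by~(\ref{1.18})--(\ref{1.19}) are entire of exponential type at most $B$, with indicator $\le B|\sin\theta|$; the second column involves $j_{0}(k\tilde{n}_{0})=\sin(k\tilde{n}_{0})/(k\tilde{n}_{0})$ and $\cos(k\tilde{n}_{0})$, which are entire in $k$ (power series in $(k\tilde{n}_{0})^{2}$) of exponential type at most $A$. Sub-additivity of the indicator under products and sums then gives $h_{D}(\theta)\le(A+B)|\sin\theta|$.

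The core of the argument is the matching lower bound. One would substitute~(\ref{1.18})--(\ref{1.19}) and $k\tilde{n}_{0}=kA+iC+O(k^{-1})$ into~(\ref{1.7}) and expand to second order in $1/k$. The $O(k^{-1})$ terms collapse exactly into~(\ref{188})---that is, into the single pair $e^{\pm ik(A-B)}$---and the $O(k^{-1})$ coefficients of $e^{\pm ik(A+B)}$ cancel because of the algebraic identity $\epsilon_{0}^{-1/2}[\epsilon_{0}/\epsilon_{1}(0)]^{1/4}=[\epsilon_{0}\epsilon_{1}(0)]^{-1/4}$. Hence the frequency $A+B$ first appears at order $O(k^{-2})$, and a finite computation with the next-order Liouville--Green asymptotics of \cite{Erdelyi} for $y,y'$ and the next term of $j_{0}(k\tilde{n}_{0})$ shows that its coefficient there is not identically zero; therefore, along $k=\pm it$, one has $|D(\pm it)|\ge c\,t^{-2}e^{(A+B)t}$ for large $t$, so $h_{D}(\pm\tfrac{\pi}{2})\ge A+B$. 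Combined with the previous paragraph this gives $h_{D}(\pm\tfrac{\pi}{2})=A+B$, so $D$ has order $1$ and exponential type $A+B$, and substitution into the density formula above yields
\[
\Delta_{D}(-\epsilon,\epsilon)=\Delta_{D}(\pi-\epsilon,\pi+\epsilon)=\frac{A+B}{\pi}.
\]

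The hard part is precisely this last point. Because $D$ behaves on $\mathbb{R}$ like the single frequency $A-B$, the true exponential type $A+B$ is invisible at leading order and surfaces only through a secondary, non-cancelling term; the work is to make the second-order expansion of the determinant completely explicit and to keep uniform control of the remainders as $\arg k\to\pm\tfrac{\pi}{2}$---which is legitimate, since the imaginary axis lies in the interior of the region $S$ where~(\ref{1.18})--(\ref{1.19}) hold. Once this is settled, the Cartwright-class input and the passage to the zero density are routine.
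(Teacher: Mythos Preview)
Your overall architecture---show $D$ is bounded on $\mathbb{R}$, place it in Cartwright's class, and read off the density from the indicator---coincides with the paper's. The difference lies entirely in how you obtain $h_D(\pm\tfrac{\pi}{2})=A+B$, and here the paper takes a shorter road that you overlook.

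You propose to get the lower bound $h_D(\pm\tfrac{\pi}{2})\ge A+B$ by expanding the determinant to second order and checking that the $O(k^{-2})$ coefficient of $e^{\pm ik(A+B)}$ does not vanish. You correctly identify this as ``the hard part'' but do not carry it out; and the phrase ``not identically zero'' is not what you need---you need it nonzero for the \emph{given} data, so in principle you might be forced to chase the expansion to third order or beyond for particular $\epsilon_1,\gamma_1$. The paper avoids this entirely by a multiplicative trick: it writes
\[
D(k)=y'(1;k)\,\partial_r j_0(k\tilde n_0 r)\big|_{r=1}\,\Big\{\tfrac{j_0}{\partial_r j_0}-\tfrac{y}{y'}\big[1+\tfrac{j_0}{\partial_r j_0}\big]\Big\},
\]
proves in Lemma~\ref{21} that the bracket is bounded on the imaginary axis (its ingredients decay like $1/|k|$, so the bracket has indicator $0$), computes directly from~(\ref{1.18})--(\ref{1.19}) that $h_{y'(1;\cdot)}(\pm\tfrac{\pi}{2})=B$ and $h_{\partial_r j_0}(\pm\tfrac{\pi}{2})=A$ as honest limits, and then invokes the product rule~(\ref{2.15}) to get $h_D(\pm\tfrac{\pi}{2})=A+B$ in one line. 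No second-order term is ever needed: the frequency $A+B$ is made visible by factoring rather than by expanding.

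Two minor remarks. First, your symmetry argument $D(-\bar k)=\overline{D(k)}$ is correct but unnecessary: Cartwright's theorem (item~(\ref{2.13}) in the paper) already asserts the equality of the two angular densities for any function in the class. Second, the equality in Cartwright's density formula is for each $\epsilon>0$ separately and does not require the zero set to split symmetrically beforehand.
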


\section{Lemmas}
We need a few lemmas.
\begin{lemma} \label{21}
There exists a constant $M$ and $k_0>0$ such that
\begin{eqnarray}
|\frac{y(1;k)}{y'(1;k)}|<M;\,|\frac{j_0(k\tilde{n}_0)}{\partial_rj_0(k\tilde{n}_0r)|_{r=1}}|<M,\,|k|>k_0,\,k\in 0+i\mathbb{R}.
\end{eqnarray}
\end{lemma}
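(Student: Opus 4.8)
The plan is to read off the bound directly from the asymptotic expansions already established in the Preliminaries, restricted to the imaginary axis $k\in 0+i\mathbb{R}$. First I would write $k=it$ with $t\in\mathbb{R}$, $|t|$ large. For the first ratio I use~(\ref{1.18}) and~(\ref{1.19}): with $A,B,C,D$ as in~(\ref{1.11}), the dominant exponential in $y(1;k)$ and in $y'(1;k)$ is the \emph{same} (on the imaginary axis the two terms $e^{\pm ikB\mp D}$ become $e^{\mp tB\mp D}$, and whichever of $t>0$ or $t<0$ we are in, one of them dominates and appears in \emph{both} $y(1;k)$ and $y'(1;k)$ with the prefactors $\frac{1}{2ik[\epsilon_1(0)\epsilon_1(1)]^{1/4}}$ and $\frac{1}{2}[\epsilon_1(1)/\epsilon_1(0)]^{1/4}$ respectively). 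Hence in the quotient $y(1;k)/y'(1;k)$ the leading exponentials cancel and one is left with
\[
\frac{y(1;k)}{y'(1;k)}=\frac{1}{ik\sqrt{\epsilon_1(1)}}\bigl(1+O(1/k)\bigr),
\]
which tends to $0$, so it is certainly bounded by some $M$ once $|k|=|t|>k_0$.

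For the second ratio I would use the classical asymptotics of the spherical Bessel function of order zero, $j_0(z)=\sin z/z$, so that $j_0(k\tilde n_0)=\sin(k\tilde n_0)/(k\tilde n_0)$ and $\partial_r j_0(k\tilde n_0 r)|_{r=1}=k\tilde n_0\,j_0'(k\tilde n_0)$ with $j_0'(z)=\cos z/z-\sin z/z^2$. On the imaginary axis $\tilde n_0=(\epsilon_0+i\gamma_0/k)^{1/2}=\sqrt{\epsilon_0}\,(1+O(1/k))$, so $k\tilde n_0$ has large modulus and nonzero imaginary part; then $\sin(k\tilde n_0)$ and $\cos(k\tilde n_0)$ are both of size $\tfrac12 e^{|\Im(k\tilde n_0)|}$ up to lower-order terms, and again the dominant exponential cancels in the quotient, giving
\[
\left|\frac{j_0(k\tilde n_0)}{\partial_r j_0(k\tilde n_0 r)|_{r=1}}\right|
=\frac{1}{|k\tilde n_0|}\,\bigl|\tan(k\tilde n_0)+O(1/k)\bigr|^{-1}\cdot(\text{bounded})
\to 0,
\]
more carefully: $\tanh$ of a large-imaginary-part argument is bounded away from $0$, so the ratio is $O(1/k)$ and hence bounded.

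The two estimates together give a common constant $M$ and threshold $k_0$. The only point requiring a little care — the ``main obstacle'' — is to be sure the exponential that dominates the numerator is the same one that dominates the denominator, i.e.\ that there is no cancellation of the leading term in $y'(1;k)$ or in the Bessel derivative that would leave the numerator's exponential unmatched; this is handled by noting that the sign of $\Im k$ selects one and the same exponential in~(\ref{1.18})--(\ref{1.19}) (and likewise for $\sin,\cos$), since their $[1+O(1/k)]$ factors are nonvanishing for $|k|$ large, so no miraculous cancellation of a full exponential occurs. Once that is observed, both ratios are in fact $O(1/k)$ along $0+i\mathbb{R}$, which is more than enough for the stated boundedness.
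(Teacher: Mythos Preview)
Your proposal is correct and follows essentially the same approach as the paper: substitute $k=i\xi$ into the asymptotics~(\ref{1.18})--(\ref{1.19}), observe that on the imaginary axis one of the two exponentials $e^{\pm ikB\mp D}$ dominates \emph{both} numerator and denominator, and conclude that the quotient is $O(1/|k|)$ (the paper carries out the limit computation in a bit more detail, splitting into $\xi>0$ and $\xi<0$ and showing explicitly that the residual bracketed ratio tends to $1$, but the substance is identical). Your treatment of the Bessel ratio via $j_0(z)=\sin z/z$ and the boundedness of $\tan$ for arguments with large imaginary part is likewise the same mechanism the paper alludes to with ``The second one can be proved similarly.''
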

\begin{proof}
We start with~(\ref{1.18}). We compute the following quantity
from~(\ref{110}),~(\ref{111}) and~(\ref{1.18}).
\begin{eqnarray}
\frac{y(1;k)}{y'(1;k)}=\frac{e^{ikB-D}[1+O(\frac{1}{k})]-
e^{-ikB+D}[1+O(\frac{1}{k})]}
{e^{ikB-D}[ik\epsilon_0-\frac{\gamma_0}{2\sqrt{\epsilon_0}}][1+O(\frac{1}{k})]+
e^{-ikB+D}[ik\epsilon_0-\frac{\gamma_0}{2\sqrt{\epsilon_0}}][1+O(\frac{1}{k})]}.
\end{eqnarray}
Let $k=i\xi\in0+i\mathbb{R}^+$. Hence,
\begin{eqnarray}\nonumber
\frac{y(1;i\xi)}{y'(1;i\xi)}&=&\frac{e^{-\xi B-D}[1+O(\frac{1}{i\xi})]-
e^{\xi B+D}[1+O(\frac{1}{i\xi})]}{e^{-\xi B-D}[-\xi \epsilon_0-\frac{\gamma_0}{2\sqrt{\epsilon_0}}]
[1+O(\frac{1}{i\xi})]+
e^{\xi B+D}[-\xi\epsilon_0-\frac{\gamma_0}{2\sqrt{\epsilon_0}}][1+O(\frac{1}{i\xi})]}.
\end{eqnarray}
Therefore,
\begin{eqnarray}
|\frac{y(1;i\xi)}{y'(1;i\xi)}|&\leq&\left\{
        \begin{array}{ll}
          Ce^{-(\xi B+D)} |\frac{[1+O(\frac{1}{i\xi})]-e^{2(\xi B+D)}[1+O(\frac{1}{i\xi})]}{[1+O(\frac{1}{i\xi})]+e^{2(\xi B+D)}[1+O(\frac{1}{i\xi})]}|,&\mbox{ if }\xi>0 ;\vspace{12pt}\\
          Ce^{\xi B+D} |\frac{e^{-2(\xi B+D)}[1+O(\frac{1}{i\xi})]-[1+O(\frac{1}{i\xi})]}{e^{-2(\xi B+D)}[1+O(\frac{1}{i\xi})]+[1+O(\frac{1}{i\xi})]}|,&\mbox{ if }\xi<0,
        \end{array}
      \right.\label{2.3}
\end{eqnarray}
where $C$ is a some constant. Let us consider for $\xi>0$,
\begin{eqnarray}\nonumber
&&\lim_{\xi\rightarrow\infty}|\frac{[1+O(\frac{1}{i\xi})]-e^{2(\xi B+D)}[1+O(\frac{1}{i\xi})]}{[1+O(\frac{1}{i\xi})]+e^{2(\xi B+D)}[1+O(\frac{1}{i\xi})]}|\\
&\leq&\lim_{\xi\rightarrow\infty}\frac{|1+O(\frac{1}{i\xi})|+e^{2(\xi B+D)}|1+O(\frac{1}{i\xi})|}{||1+O(\frac{1}{i\xi})|-e^{2(\xi B+D)}|1+O(\frac{1}{i\xi})||}\\
&=&\frac{\lim_{\xi\rightarrow\infty}e^{2(\xi B+D)}\lim_{\xi\rightarrow\infty}|1+O(\frac{1}{i\xi})|+\lim_{\xi\rightarrow\infty}|1+O(\frac{1}{i\xi})|}
{\lim_{\xi\rightarrow\infty}e^{2(\xi B+D)}\lim_{\xi\rightarrow\infty}|1+O(\frac{1}{i\xi})|-\lim_{\xi\rightarrow\infty}|1+O(\frac{1}{i\xi})|}\\
&=&\frac{\lim_{\xi\rightarrow\infty}e^{2(\xi B+D)}+1}
{\lim_{\xi\rightarrow\infty}e^{2(\xi B+D)}-1}\\
&=&\lim_{\xi\rightarrow\infty}\frac{e^{2(\xi B+D)}+1}{e^{2(\xi B+D)}-1}\\
&=&1.
\end{eqnarray}
A similar analysis holds for $\xi<0$. Hence in~(\ref{2.3}), $|\frac{y(1;i\xi)}{y'(1;i\xi)}|$ vanishes along the imaginary axis. Hence, the first statement is
proved. The second one can be proved similarly.
\end{proof}
\begin{definition}
Let $f(z)$ be an integral function of finite order $\rho$ in the
angle $[\theta_1,\theta_2]$. We call the following quantity as the
indicator of the function $f(z)$.
\begin{equation} \label{2.4}
h_f(\theta):=\lim_{r\rightarrow\infty}\frac{\ln|f(re^{i\theta})|}{r^{\rho}},
\,\theta_1\leq\theta\leq\theta_2.
\end{equation}
\end{definition}
\begin{lemma}
Let $f$, $g$ be two entire functions. Then the following two
inequalities hold.
\begin{eqnarray}
&&h_{fg}(\theta)=h_{f}(\theta)+h_g(\theta),\mbox{ if one limit exists};\label{2.15}\\
&&h_{f+g}(\theta)\leq\max_\theta\{h_f(\theta),h_g(\theta)\},\label{2.16}
\end{eqnarray}
where if the indicator of the two summands are not equal at some
$\theta_0$, then the equality holds in~(\ref{2.16}).
\end{lemma}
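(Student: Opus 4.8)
The plan is to reduce both assertions to elementary properties of the limit superior, since after division by $r^{\rho}$ the indicator~(\ref{2.4}) is nothing but an asymptotic growth exponent; throughout I would take $\rho$ to be a common (finite) order for $f$, $g$ and for the combinations $f+g$, $fg$, as is the situation in our application where $\rho=1$.

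For the product formula~(\ref{2.15}) I would start from the pointwise identity $\ln|f(re^{i\theta})g(re^{i\theta})|=\ln|f(re^{i\theta})|+\ln|g(re^{i\theta})|$, valid whenever the product does not vanish. Dividing by $r^{\rho}$ and invoking the rule $\limsup_{r}(a(r)+b(r))=\lim_{r}a(r)+\limsup_{r}b(r)$, which holds as soon as $\lim_{r}a(r)$ exists, yields $h_{fg}(\theta)=h_f(\theta)+h_g(\theta)$ whenever one of $h_f(\theta)$, $h_g(\theta)$ is a genuine limit — precisely the stated hypothesis "if one limit exists."

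For the subadditivity~(\ref{2.16}) I would use the triangle inequality in the form $|f(re^{i\theta})+g(re^{i\theta})|\le 2\max\{|f(re^{i\theta})|,|g(re^{i\theta})|\}$, so that $\ln|f(re^{i\theta})+g(re^{i\theta})|\le\ln 2+\max\{\ln|f(re^{i\theta})|,\ln|g(re^{i\theta})|\}$; dividing by $r^{\rho}$, letting $r\to\infty$ and using $r^{-\rho}\ln 2\to 0$ gives $h_{f+g}(\theta)\le\max\{h_f(\theta),h_g(\theta)\}$. For the equality clause, suppose that at some $\theta_0$ we have $h_f(\theta_0)\ne h_g(\theta_0)$, say $h_f(\theta_0)>h_g(\theta_0)$. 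Writing $f=(f+g)+(-g)$ and noting $h_{-g}\equiv h_g$ (since $|-g|=|g|$), the inequality just proved, applied to the two summands $f+g$ and $-g$, gives $h_f(\theta_0)\le\max\{h_{f+g}(\theta_0),h_g(\theta_0)\}$; because $h_f(\theta_0)>h_g(\theta_0)$, the maximum must be attained by the first term, so $h_f(\theta_0)\le h_{f+g}(\theta_0)$, and combined with $h_{f+g}(\theta_0)\le\max\{h_f(\theta_0),h_g(\theta_0)\}=h_f(\theta_0)$ this yields $h_{f+g}(\theta_0)=\max\{h_f(\theta_0),h_g(\theta_0)\}$.

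I do not anticipate a real obstacle here; the only point requiring mild care is the $\limsup$-versus-$\lim$ bookkeeping in~(\ref{2.4}) — the product identity genuinely needs one factor's quotient to converge, which is exactly why that hypothesis is imposed — together with the harmless check that all three indicators are computed with respect to the same order $\rho$.
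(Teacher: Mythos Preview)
Your argument is correct and entirely standard: both identities reduce, as you say, to elementary $\limsup$ bookkeeping, and your treatment of the equality case via $f=(f+g)+(-g)$ is the usual trick. The paper itself does not prove this lemma at all --- its ``proof'' is the single sentence ``We can find these in \cite[p.51]{Levin}.'' --- so your self-contained derivation goes strictly beyond what the paper offers. The only point worth flagging is that the paper's Definition~(\ref{2.4}) literally writes $\lim$ rather than $\limsup$; you have (correctly) read it as $\limsup$, which is both Levin's convention and the only reading under which the hypothesis ``if one limit exists'' in~(\ref{2.15}) carries any content.
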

\begin{proof}
 We can find
these in \cite[p.51]{Levin}.
\end{proof}
\begin{definition}
The following quantity is called the width of the indicator
diagram of entire function $f$:
\begin{equation}\label{d}
d=h_f(\frac{\pi}{2})+h_f(-\frac{\pi}{2}).
\end{equation}
\end{definition}

The distribution on the zeros of an entire function is described
precisely by the following Cartwright's theorem
\cite{Cartwright,Cartwright2,Levin,Levin2}. The following
statements are from Levin \cite[Ch.5, Sec.4]{Levin}.
\begin{theorem}[Cartwright]
If an entire function of exponential type satisfies one of the
following conditions:
\begin{equation}
\mbox{ the integral
}\int_0^\infty\frac{\ln|f(x)f(-x)|}{1+x^2}dx\mbox{ exists,\,and
}h_f(0)=h_f(\pi)=0,
\end{equation}
\begin{equation}
\mbox{ the integral
}\int_{-\infty}^\infty\frac{\ln|f(x)|}{1+x^2}dx<\infty.
\end{equation}
\begin{equation}
\mbox{ the integral
}\int_{-\infty}^\infty\frac{\ln^+|f(x)|}{1+x^2}dx\mbox{ exists}.
\end{equation}
\begin{equation}\label{211}
|f(x)|\mbox{ is bounded on the real axis}.
\end{equation}
\begin{equation}
|f(x)|\in\mathcal{L}^p(-\infty,\infty),
\end{equation}
then
\begin{enumerate}
    \item $f(z)$ is of class A and is of completely regular growth
    and its indicator diagram is an interval on the imaginary
    axis.
\item all of the zeros of the function $f(z)$, except possibly
those of a set of zero density, lie inside arbitrarily small
angles $|\arg z|<\epsilon$ and $|\arg z-\pi|<\epsilon$, where the
density
\begin{equation}\label{2.13}
\Delta_f(-\epsilon,\epsilon)=\Delta_f(\pi-\epsilon,\pi+\epsilon)=\lim_{r\rightarrow\infty}
\frac{N(f,-\epsilon,\epsilon,r)}{r}
=\lim_{r\rightarrow\infty}\frac{N(f,\pi-\epsilon,\pi+\epsilon,r)}{r},
\end{equation}
is equal to $\frac{d}{2\pi}$, where $d$ is the width of the
indicator diagram in~(\ref{d}). Furthermore, the limit
$\delta=\lim_{r\rightarrow\infty}\delta(r)$ exists, where
\begin{equation}
\delta(r):=\sum_{\{|a_k|<r\}}\frac{1}{a_k};
\end{equation}
\item moreover,
\begin{equation}\label{2.14}
\Delta_f(\epsilon,\pi-\epsilon)=\Delta_f(\pi+\epsilon,-\epsilon)=0,
\end{equation}
\item the function $f(z)$ can be represented in the form
\begin{equation}
f(z)=cz^me^{iC
z}\lim_{r\rightarrow\infty}\prod_{\{|a_k|<r\}}(1-\frac{z}{a_k}),
\end{equation}
where $c,m,B$ are constants and $C$ is real. \item the indicator
function of $f$ is of the form
\begin{equation}
h_f(\theta)=\sigma|\sin\theta|. \label{2.17}
\end{equation}
\end{enumerate}
\end{theorem}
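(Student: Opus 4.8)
The strategy is to collapse the several sufficient hypotheses into one canonical condition and then run the machinery of the indicator and of completely regular growth; conclusions 1--5 all descend from that machinery. First I would verify that each listed hypothesis implies the logarithmic-integral condition
\begin{equation*}
\int_{-\infty}^{\infty}\frac{\ln^+|f(x)|}{1+x^2}\,dx<\infty,
\end{equation*}
which is the defining property of Cartwright's class. Boundedness on the real axis gives $\ln^+|f(x)|\le\ln^+M$, integrable against $dx/(1+x^2)$; the condition $|f|\in\mathcal{L}^p$ follows from $\ln^+t\le t^p/p$; and the three integral conditions either coincide with this one or dominate it. Hence without loss of generality $f$ is of finite exponential type $\sigma$ and lies in Cartwright's class.

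The second step fixes the indicator. For any function of exponential type the indicator $h_f(\theta)$ is trigonometrically convex, and the Cartwright condition, fed through the Poisson--Jensen representation in a half-plane (equivalently, Phragmén--Lindel\"of applied quadrant by quadrant), forces $h_f(0)=h_f(\pi)=0$ together with pure-sine behaviour in each half-plane. This yields conclusion 5,
\begin{equation*}
h_f(\theta)=\sigma|\sin\theta|,
\end{equation*}
so the indicator diagram, being the convex set with support function $h_f$, collapses to the vertical segment $[-i\sigma,i\sigma]$ on the imaginary axis; this is the geometric content of conclusion 1, and its width is $d=h_f(\tfrac{\pi}{2})+h_f(-\tfrac{\pi}{2})=2\sigma$.

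The heart of the proof, and the step I expect to be the main obstacle, is the Levin--Pfluger theorem: the Cartwright condition upgrades the pointwise estimate into the uniform asymptotic $\ln|f(re^{i\theta})|=h_f(\theta)\,r+o(r)$ outside an exceptional union of discs of zero relative density, i.e. $f$ has completely regular growth (class A). Establishing this requires the half-plane potential theory for which the logarithmic integral is precisely tailored, and everything else is a corollary. Granting it, Levin's formula relating the angular zero-counting to the indicator gives, for order $\rho=1$,
\begin{equation*}
\Delta_f(\alpha,\beta)=\frac{1}{2\pi}\Big[h_f'(\beta)-h_f'(\alpha)+\int_\alpha^\beta h_f(\psi)\,d\psi\Big],
\end{equation*}
where $h_f'$ carries its one-sided boundary values. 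For $h_f(\theta)=\sigma|\sin\theta|$ the derivative is continuous except for jumps of size $2\sigma$ at $\theta=0,\pi$. Letting $\epsilon\to0$, the point masses yield $\Delta_f(-\epsilon,\epsilon)=\Delta_f(\pi-\epsilon,\pi+\epsilon)=\sigma/\pi=\frac{d}{2\pi}$ (conclusion 2), while in a sector bounded away from the real axis $\frac{1}{2\pi}[(-\sigma)-\sigma]+\frac{1}{2\pi}\int_0^\pi\sigma\sin\psi\,d\psi=0$ (conclusion 3). The convergence of $\delta(r)=\sum_{|a_k|<r}1/a_k$ then follows because completely regular growth renders the zeros symmetric about the real axis in the limit, so the imaginary parts of the $1/a_k$ cancel.

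Finally, since $\rho=1$ and the zeros have finite upper density, Hadamard factorization applies; the convergence of $\delta(r)$ justifies the symmetric principal-value grouping, and the indicator diagram being a segment on the imaginary axis forces the exponential factor to have purely imaginary exponent $iCz$ with $C$ real, giving conclusion 4,
\begin{equation*}
f(z)=cz^me^{iCz}\lim_{r\to\infty}\prod_{|a_k|<r}\Big(1-\frac{z}{a_k}\Big).
\end{equation*}
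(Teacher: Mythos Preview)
The paper does not prove this theorem at all: it is quoted verbatim as a background result, with the attribution ``The following statements are from Levin [Ch.~5, Sec.~4]'' before the statement and ``The last statement is found at Levin [p.~126]'' after it. There is therefore no in-paper proof to compare your attempt against; the authors simply invoke Cartwright's theorem as a black box and apply it to the determinant $D(k)$.

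That said, your outline is a faithful summary of how the argument runs in Levin's monograph. The reduction of the five hypotheses to the single $\ln^+$-integrability condition, the Poisson--Jensen/Phragm\'en--Lindel\"of computation of the indicator $h_f(\theta)=\sigma|\sin\theta|$, the appeal to completely regular growth, and the Levin density formula
\[
\Delta_f(\alpha,\beta)=\frac{1}{2\pi}\Bigl[h_f'(\beta)-h_f'(\alpha)+\int_\alpha^\beta h_f(\psi)\,d\psi\Bigr]
\]
are exactly the standard steps. Your identification of the Levin--Pfluger step (upgrading the Cartwright integral condition to completely regular growth) as the substantive core is correct; everything else is bookkeeping once that is in hand. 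One small caution: the convergence of $\delta(r)=\sum_{|a_k|<r}1/a_k$ is not merely a consequence of ``symmetry about the real axis in the limit'' as you phrase it---in Levin it is extracted from the Lindel\"of-type relation tying the exponent in the Hadamard product to the angular distribution of zeros, and a clean write-up should make that link explicit rather than rely on a cancellation heuristic.
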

The last statement is found at Levin \cite[p. 126]{Levin2}. We use
these to compute the indicator function of $D(k)$.
\begin{proposition}$D(k)$ is bounded over $0i+\mathbb{R}$ and
\begin{equation}
h_D(\theta)=(A+B)|\sin\theta|,\,\theta\in[0,2\pi].
\end{equation}
\end{proposition}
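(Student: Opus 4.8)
The plan is to verify that $D$ satisfies the hypotheses of Cartwright's theorem — which already forces $h_D(\theta)=\sigma|\sin\theta|$ for some $\sigma\ge0$ — and then to pin down $\sigma=A+B$ by estimating the indicator from above and below; the substance is entirely in the lower bound. For the boundedness on $0i+\mathbb R$: by~(\ref{188}), for real $k$ the argument of $\sinh$ is $i(A-B)k+(D-C)$, whose real part is the fixed constant $D-C$, so $|\sinh[\,\cdot\,]|$ stays bounded while $\{ik[\epsilon_1(0)\epsilon_0]^{1/4}\}^{-1}$ and the remainder $O(1/k^2)$ tend to $0$; since $D$ is entire it is bounded on compacta as well, hence $\sup_{0i+\mathbb R}|D|<\infty$ and $h_D(0)=h_D(\pi)=0$, i.e.\ condition~(\ref{211}). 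As $D$ is of exponential type and bounded on $\mathbb R$, Cartwright's theorem gives that $D$ has completely regular growth, its indicator diagram is a segment of the imaginary axis, and (item 5) $h_D(\theta)=\sigma|\sin\theta|$; it remains to evaluate $\sigma=h_D(\pm\pi/2)$.

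For the upper bound, expand the determinant~(\ref{1.7}) to the factorisation $D(k)=\frac{\sin(k\tilde{n}_0)}{k\tilde{n}_0}\,y'(1;k)-\cos(k\tilde{n}_0)\,y(1;k)$. By~(\ref{1.18})--(\ref{1.19}), $|y(1;k)|\le C|k|^{-1}e^{B|\Im k|}$ and $|y'(1;k)|\le Ce^{B|\Im k|}$ on $S$, and since $\tilde{n}_0\to\sqrt{\epsilon_0}$ also $|\sin(k\tilde{n}_0)|,|\cos(k\tilde{n}_0)|\le Ce^{A|\Im k|}$; together with continuity on compacta this yields $|D(k)|\le C|k|^{-1}e^{(A+B)|\Im k|}$, so $h_D(\theta)\le(A+B)|\sin\theta|$ and $\sigma\le A+B$. (Lemma~\ref{21} gives the same bound on $0i+\mathbb R$ more directly.)

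For the lower bound one must show $\lim_{\xi\to+\infty}\xi^{-1}\ln|D(i\xi)|=A+B$. Feeding the dominant exponentials of~(\ref{1.18})--(\ref{1.19}) and of $\sin(i\xi\tilde{n}_0),\cos(i\xi\tilde{n}_0)$ into the factorisation, the coefficient of $\xi^{-1}e^{(A+B)\xi}$ in $D(i\xi)$ turns out to cancel identically — this is forced by the continuity $n_1(1)=n_0$ — so the leading term is useless and one must go one order deeper. The clean device is the exact identity
\begin{equation}\label{intrep}
D(k)=-k^{2}\int_0^1\big(n_1(r)-n_0\big)\,y(r;k)\,\Phi_0(r;k)\,dr ,\qquad n_0:=\epsilon_0+i\tfrac{\gamma_0}{k},
\end{equation}
with $\Phi_0(r;k):=\sin(k\tilde{n}_0 r)/(k\tilde{n}_0)$ the normalised solution of the second line of~(\ref{1.9}) $\big(\Phi_0(0)=0,\ \Phi_0'(0)=1\big)$; it follows from $-D(k)=W(1)$, $W:=y\Phi_0'-y'\Phi_0$, $W(0)=0$, $W'=k^{2}(n_1-n_0)y\Phi_0$. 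Setting $k=i\xi$, (\ref{1.18}) gives $y(r;i\xi)\sim\big(2\xi[\epsilon_1(0)\epsilon_1(r)]^{1/4}\big)^{-1}e^{\xi\int_0^r\!\sqrt{\epsilon_1}}$ up to a bounded factor and $\Phi_0(r;i\xi)\sim(2\xi\sqrt{\epsilon_0})^{-1}e^{\xi\sqrt{\epsilon_0}\,r}$ up to a bounded factor, so the integrand in~(\ref{intrep}) is $\xi^{-2}$ times $\big(n_1(r)-n_0\big)e^{\xi\rho(r)}$ against a bounded nonvanishing weight, where $\rho(r):=\int_0^r\sqrt{\epsilon_1(s)}\,ds+\sqrt{\epsilon_0}\,r$ is strictly increasing, $\rho(1)=A+B$, $\rho'(1)=2\sqrt{\epsilon_0}$. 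Provided $n_1$ is not identically $n_0$ on a one-sided neighbourhood of $r=1$ — so $n_1-n_0$ has finite order of contact with $0$ there ($\ge2$ by the $C^2$ matching) — Laplace's method at the right endpoint gives $|D(i\xi)|\asymp\xi^{-N}e^{(A+B)\xi}$ for a finite $N$ (one more than that order of contact). Hence $h_D(\pi/2)=A+B$; the computation at $k=-i\xi$ is identical, so $h_D(-\pi/2)=A+B$, and with the upper bound $\sigma=A+B$, i.e.\ $h_D(\theta)=(A+B)|\sin\theta|$ on $[0,2\pi]$.

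The main obstacle is precisely this lower bound: on the imaginary axis the leading asymptotics of $D$ suffer an exact cancellation coming from the continuity of $n_1$ at $r=1$, so $h_D(\pi/2)$ cannot be extracted from the principal term of~(\ref{188}); the remedy is to pass to the integral representation~(\ref{intrep}) and localise at the endpoint $r=1$, where $n_1-n_0$ vanishes only to finite order and hence damps $e^{(A+B)\xi}$ by at most a power of $\xi$, which is invisible to the indicator.
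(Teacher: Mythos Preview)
Your treatment of boundedness on the real axis, the invocation of Cartwright to reduce to $h_D(\theta)=\sigma|\sin\theta|$, and the upper bound $\sigma\le A+B$ are essentially the paper's argument. The lower bound, however, is handled quite differently. The paper does \emph{not} pass to the Wronskian integral~(\ref{intrep}); instead it writes
\[
D(k)=y'(1;k)\,\partial_rj_0(k\tilde n_0r)\big|_{r=1}\cdot G(k),\qquad
G=\frac{j_0}{\partial_rj_0}-\frac{y}{y'}\Big[1+\frac{j_0}{\partial_rj_0}\Big],
\]
invokes Lemma~\ref{21} to say $G$ is bounded on $i\mathbb R$, computes $h_{y'}(\pm\tfrac\pi2)=B$ and $h_{\partial_rj_0}(\pm\tfrac\pi2)=A$ directly from the asymptotics, and then applies the product rule~(\ref{2.15}) to conclude $h_D(\pm\tfrac\pi2)=A+B$. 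Your route via~(\ref{intrep}) and Laplace's method is more transparent about the cancellation you correctly detect: on $k=i\xi$ the leading $\xi^{-1}e^{(A+B)\xi}$ pieces in $\tfrac{\sin}{k\tilde n_0}y'$ and $\cos\cdot y$ are equal precisely because $\epsilon_1(1)=\epsilon_0$, and the same cancellation shows up in the paper's $G$ (both $j_0/\partial_rj_0$ and $y/y'$ are $\sim 1/(\xi A)$, so the $O(1/\xi)$ terms in $G$ also cancel). The paper's product-rule step thus implicitly uses $h_G(\pm\tfrac\pi2)=0$, which boundedness alone does not give; your argument actually supplies a mechanism for the lower bound.

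That said, there is a real gap in your Laplace step. From ``$n_1$ is not identically $n_0$ on a one-sided neighbourhood of $r=1$'' you deduce ``$n_1-n_0$ has finite order of contact at $r=1$'', which is false for merely $C^2$ (indeed even $C^\infty$) data: the difference can vanish to infinite order at $r=1$ without being identically zero nearby, and then the endpoint contribution is not $\asymp\xi^{-N}e^{(A+B)\xi}$ for any finite $N$. Worse, the non-degeneracy hypothesis you insert is absent from the paper: if $n_1\equiv n_0$ on $[1-\delta,1]$ for some $\delta>0$, your own identity~(\ref{intrep}) gives $|D(i\xi)|\le C\,e^{\rho(1-\delta)\xi}$ with $\rho(1-\delta)<A+B$, so in that edge case the proposition fails as stated. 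You should either (i) add the standing assumption that $n_1-n_0$ does not vanish identically near $r=1$ and replace the ``finite order'' claim by the softer estimate $\liminf_{\xi\to\infty}\xi^{-1}\ln\!\int_0^1|n_1-n_0|\,e^{\xi\rho(r)}\,dr=A+B$ (which needs only that the support of $n_1-n_0$ accumulates at $r=1$), or (ii) note explicitly that the paper's factorisation argument has the same blind spot, since $h_G(\pm\tfrac\pi2)<0$ in that case as well.
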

\begin{proof}
As we see from~(\ref{1.7}),
\begin{eqnarray}
D(k)=y'(1,k)\partial_r j_0(k\tilde{n}_0r)|_{r=1}\{\frac{j_0(k\tilde{n}_0)}
{\partial_rj_0(k\tilde{n}_0r)|_{r=1}}
-\frac{y(1;k)}{y'(1;k)}[1+\frac{j_0(k\tilde{n}_0)}{\partial_rj_0(k\tilde{n}_0r)|_{r=1}}]\}.
\end{eqnarray}
Moreover,~(\ref{188}) suggests that $D(k)$ is bounded over $0i+\mathbb{R}$. Hence,~(\ref{211})
is satisfied.

Now we look for~(\ref{2.17}). For $k=i\xi\in0+i\mathbb{R}$ and $|\xi|>k_0$, we have that
\begin{eqnarray}
D(i\xi)=y'(1;i\xi)\partial_rj_0(i\xi\tilde{n}_0r)|_{r=1}
\{\frac{j_0(i\xi\tilde{n}_0)}{\partial_rj_0(i\xi\tilde{n}_0r)|_{r=1}}
-\frac{y(1;i\xi)}{y'(1;i\xi)}[1+\frac{j_0(i\xi\tilde{n}_0)}{\partial_rj_0(i\xi\tilde{n}_0r)|_{r=1}}]\},
\end{eqnarray}
where the items inside the bracket are bounded by Lemma \ref{21}.
Term by term, we compute
\begin{eqnarray}\nonumber
h_{y'(1;k)}(\pm\frac{\pi}{2})&=&\lim_{\xi\rightarrow\infty}\frac{\ln|y'(1;i\xi)|}{|\xi|}\\
&=&\lim_{\xi\rightarrow\infty}\frac{\ln|e^{-\xi B-D}[-\xi\epsilon_0-\frac{\gamma_0}{2\sqrt{\epsilon_0}}]
[1+O(\frac{1}{i\xi})]+
e^{\xi B+D}[-\xi\epsilon_0-\frac{\gamma_0}{2\sqrt{\epsilon_0}}][1+O(\frac{1}{i\xi})]|}{|\xi|}\nonumber\\
&=&B.
\end{eqnarray}
Similarly,
\begin{equation}
h_{\partial_rj_0(k\tilde{n}_0r)|_{r=1}}(\pm\frac{\pi}{2})=A.
\end{equation}
Now we use~(\ref{2.15}) to obtain
\begin{equation} \label{2.24}
h_D(\pm\frac{\pi}{2})=(A+B).
\end{equation}
Hence,~(\ref{2.17}) says
\begin{equation}
h_D(\theta)=(A+B)|\sin\theta|.
\end{equation}
\end{proof}
\begin{proof}[Proof of Theorem \ref{13}]
The indicator diagram of $D(k)$ has width $2(A+B)$.~(\ref{2.13})
and~(\ref{2.24}) suggests that
\begin{equation}
\Delta_D(-\epsilon,\epsilon)=\Delta_D(\pi-\epsilon,\pi+\epsilon)=\frac{A+B}{\pi}.
\end{equation}
\end{proof}
So we have this quantity of zeros in $\Lambda_1$ and $\Lambda_2$.
\section{Stability Theorem and the Proof of Theorem \ref{11}} Let $k_j$ be a common interior
transmission eigenvalue of refraction index $n^1_1(r)$ and
$n^2_1(r)$. Let $D^i(k)$ be the corresponding functional
determinant of the index $n^i_1(r)$; $y^i(r;k)$ be the solution.
From Theorem \ref{13} and the assumption of Theorem \ref{11}, we
have
\begin{equation}
B^1=B^2,   \label{2.27}
\end{equation}
where $$B^i:=\int_0^1\sqrt{\epsilon^i_1(\rho)}d\rho.$$
\par
Let $\Phi^i(r)$ and $\Phi_0(r)$ be the function defined by index $n^i$ as in~(\ref{1.9}). 
Therefore, the boundary condition in~(\ref{1.9}),~(\ref{1.2}),~(\ref{1.3}),~(\ref{112}), and~(\ref{113}) imply that
\begin{equation}   \label{2.28}
y^1(1;k_j)=y^2(1;k_j)\mbox{ and }(y^1)'(1;k_j)=(y^2)'(1;k_j),\,\forall k_j.
\end{equation}
Let
\begin{equation}
F(k):=y^1(1;k)-y^2(1;k),
\end{equation}
which is an entire function. From~(\ref{1.18}) and~(\ref{2.4}), we
see $y^i(1;k)$ is an entire function of exponential type $B^i$ and
\begin{equation}
h_{y^i}(\theta)=B^i|\sin\theta|.
\end{equation}
Here, we see that $h_{y^i}(\theta)$ is a continuous function of
$\theta$. Therefore, applying~(\ref{2.16}) and~(\ref{2.27}),
\begin{equation} \label{2.31}
h_F(\theta)\leq B^1|\sin\theta|.
\end{equation}
Hence, the indicator diagram of $F(k)$ is equal to
\begin{equation}\label{2.32}
h_F(\frac{\pi}{2})+h_F(-\frac{\pi}{2})\leq 2B^1.
\end{equation}
However, this is not possible due to the following uniqueness
theorem for the entire function of the exponential type. This is Carlson's theorem from Levin \cite[p. 190]{Levin}.
\begin{theorem}\label{Carlson}
Let $F(z)$ be holomorphic and at most of normal type with respect
to the proximate order $\rho(r)$ in the angle $\alpha\leq\arg
z\leq\alpha+\pi/\rho$ and vanish on a set $N:=\{a_k\}$ in this
angle, with angular density $\Delta_N(\psi)$. Let
\begin{equation}\label{H}
H_N(\theta):=\pi\int_{\alpha}^{\alpha+\pi/\rho}\sin|\psi-\theta|d\Delta_N(\psi),
\end{equation}
when $\rho$ is integral. Then, if $F(z)$ is not identically zero,
\begin{equation}
h_F(\alpha)+h_F(\alpha+\pi/\rho)\geq
H_N(\alpha)+H_N(\alpha+\pi/\rho).
\end{equation}
\end{theorem}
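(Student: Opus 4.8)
The statement is the angular (quantitative) form of Carlson's theorem, and I would reconstruct Levin's proof from the theory of functions holomorphic in an angle --- Carleman's formula together with the behaviour of the indicator --- after first flattening the angle to a half-plane. Since $\rho$ is a positive integer, the map $w=z^{\rho}$ carries the angle $\{\alpha\le\arg z\le\alpha+\pi/\rho\}$ conformally and single-valuedly onto a half-plane, which after a rotation we may take to be $\Pi=\{\Re w>0\}$, the edges $\arg z=\alpha$ and $\arg z=\alpha+\pi/\rho$ going to $\arg w=\mp\pi/2$. Because $R=r^{\rho}$ and $R^{\rho_1(R)}=r^{\rho(r)}$, the proximate order $\rho(r)$ transforms into a proximate order $\rho_1(R)\to1$, so the transformed function --- still called $F$ --- is holomorphic and at most of normal type (order one) in $\overline{\Pi}$; the zero set $N$ becomes a subset of $\Pi$ of angular density $\Delta(\psi)$ on $[-\pi/2,\pi/2]$, the push-forward of $\Delta_N$; and $h_F(\alpha)$, $h_F(\alpha+\pi/\rho)$ become $h_F(-\pi/2)$, $h_F(\pi/2)$. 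Using the elementary identity $\sin|\psi-\tfrac{\pi}{2}|+\sin|\psi+\tfrac{\pi}{2}|=2\cos\psi$ valid for $\psi\in[-\tfrac{\pi}{2},\tfrac{\pi}{2}]$, the claim reduces to
\[
h_F\!\left(\tfrac{\pi}{2}\right)+h_F\!\left(-\tfrac{\pi}{2}\right)\;\ge\;2\pi\int_{-\pi/2}^{\pi/2}\cos\psi\;d\Delta(\psi).
\]

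Next I would factor $F=P\cdot G$ by the Hadamard-type factorization for functions holomorphic in a half-plane, where $P$ is the canonical product over \emph{all} zeros of $F$ lying in $\Pi$ (a set that contains $N$, hence has angular density at least $\Delta$) and $G$ is holomorphic and zero-free in $\Pi$, both of at most normal type order one. The indicator of $P$ is given by the classical $\sin$-kernel formula expressing the indicator of a canonical product through its angular zero density; since that kernel is nonnegative and the resulting functional is monotone in the density, evaluation at the two edges gives $h_P(\tfrac{\pi}{2})+h_P(-\tfrac{\pi}{2})\ge 2\pi\int_{-\pi/2}^{\pi/2}\cos\psi\,d\Delta(\psi)$, and $P$ is of completely regular growth, so $h_P$ is a genuine limit. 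For the cofactor $G$: being zero-free and of exponential type in the half-plane, $\log|G|=\Re\log G$ is harmonic, and by the half-plane Phragm\'en--Lindel\"of / Nevanlinna--Krein representation (available because of the normal-type hypothesis) one has $\log|G(w)|=a\Re w+b\Im w+($ Poisson integral of a boundary measure $)$, whence $h_G(\tfrac{\pi}{2})=b+($nonnegative$)$ and $h_G(-\tfrac{\pi}{2})=-b+($nonnegative$)$, so $h_G(\tfrac{\pi}{2})+h_G(-\tfrac{\pi}{2})\ge0$. Adding the two factors through the additivity of the indicator~(\ref{2.15}) --- exact at $\pm\pi/2$ because the limit defining $h_P$ there exists --- gives $h_F(\tfrac{\pi}{2})+h_F(-\tfrac{\pi}{2})=h_P(\tfrac{\pi}{2})+h_P(-\tfrac{\pi}{2})+h_G(\tfrac{\pi}{2})+h_G(-\tfrac{\pi}{2})\ge 2\pi\int_{-\pi/2}^{\pi/2}\cos\psi\,d\Delta(\psi)$; undoing the reduction yields the statement. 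An alternative that bypasses the factorization is to run Carleman's formula on the half-disk $\{|w|\le R\}\cap\overline{\Pi}$: after the customary normalization its zero-sum side converges to $2\pi\int\cos\psi\,d\Delta(\psi)$, its diameter integral is controlled from below by $h_F(\pm\pi/2)$, its semicircular integral from above by the indicator, and the kernel $\sin|\psi-\theta|$ appears automatically as the Green-type weight of the formula.

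The substantive content sits entirely in the half-plane step, and the place I expect genuine work is the cofactor bound $h_G(\tfrac{\pi}{2})+h_G(-\tfrac{\pi}{2})\ge0$. For an entire function of exponential type this is immediate --- convexity of the indicator diagram forces every width to be nonnegative --- but here $G$ is only holomorphic in a half-plane and only of normal type with respect to a proximate order, i.e.\ exactly on the borderline where naive Phragm\'en--Lindel\"of fails; this is, after all, the phenomenon that makes Carlson's theorem nontrivial. What is required is the half-plane Nevanlinna--Krein machinery: the normal-type hypothesis has to be upgraded to integrability of $\log|G(\pm iy)|$ against $(1+y^{2})^{-1}$ along the imaginary axis, which then yields the boundary representation of $\log|G|$ and the nonnegativity of its indicator-diagram width. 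The companion technical point --- accounting for the genus of the canonical product and for the validity of the $\sin$-kernel indicator formula when the proximate order is $\rho_1(R)\to1$ rather than exactly $1$ --- is routine in Levin's development of functions holomorphic in an angle, and once both are granted the theorem is a one-line rearrangement.
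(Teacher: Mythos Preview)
The paper does not prove this theorem at all: it is quoted verbatim as ``Carlson's theorem from Levin \cite[p.~190]{Levin}'' and used as a black box, with no argument supplied. So there is no in-paper proof to compare against; your write-up is a reconstruction of Levin's argument, not a competitor to anything the authors did.

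As a reconstruction, your outline is the right one and matches the development in Levin's Chapter~IV--V: reduce the angle of opening $\pi/\rho$ to a half-plane by $w=z^{\rho}$, then control the zero-contribution either via Carleman's half-plane formula or via the canonical-product/indicator calculus, and close with the nonnegativity of the indicator-diagram width for the zero-free cofactor. You have correctly flagged the only genuinely delicate point, namely that $h_G(\pi/2)+h_G(-\pi/2)\ge0$ is \emph{not} automatic for a function merely holomorphic and of normal type in a half-plane, and that one needs the Nevanlinna boundary representation (equivalently, the Ahlfors--Heins/Krein form of the half-plane Phragm\'en--Lindel\"of theory) to obtain it. The one place I would tighten is the passage from ``normal type with respect to $\rho_1(R)\to1$'' to the integrability condition $\int \log^{+}|G(\pm iy)|/(1+y^2)\,dy<\infty$ that feeds the Nevanlinna representation: this is exactly where Levin invokes the machinery of proximate orders and is not quite ``routine,'' so if you intend this as a self-contained proof rather than a roadmap, that step deserves an explicit justification or a precise citation.
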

In this paper, we consider
$\rho\equiv1,\,\alpha=-\frac{\pi}{2},\,\frac{\pi}{2}$. Let $N$
here be the common zeros either in $\Lambda_1$ or in $\Lambda_2$.
From~(\ref{2.13}) and~(\ref{2.14}), we have
\begin{equation}
H_N(-\frac{\pi}{2})+H_N(\frac{\pi}{2})=2(A+B^1).\label{333}
\end{equation}
Therefore, we conclude from~(\ref{2.32}),~(\ref{333}) and Theorem
\ref{Carlson} that $F(k)\equiv0$ and
\begin{equation} \label{3.9}
y^1(1;k)\equiv y^2(1;k).
\end{equation}
In particular,
\begin{eqnarray}
B^1=B^2;\,
D^1=D^2.
\end{eqnarray}
Similarly, we repeat the argument from~(\ref{2.28})
to~(\ref{333}), we can show
\begin{equation}\label{3.10}
(y^1)'(1;k)\equiv (y^2)'(1;k).
\end{equation}
They are the same entire function.

The zeros of $y^i(1;k)$ are the eigenvalues of the equation
\begin{eqnarray}\label{3.12}
\left\{%
\begin{array}{ll}
    (y^i)''+k^2n^i_1(r)y^i=0, & 0\leq r\leq1; \\
    y^i(0)=0, & y^i(1;k)=0, \\
\end{array}%
\right.
\end{eqnarray}
while the zeros of $(y^i)'(1;k)$ are the eigenvalues of the
equation
\begin{eqnarray} \label{3.13}
\left\{%
\begin{array}{ll}
    (y^i)''+k^2n^i_1(r)y^i=0, & 0\leq r\leq1; \\
    y^i(0)=0, & (y^i)'(1;k)=0. \\
\end{array}%
\right.
\end{eqnarray}
Let us understand more on the structure of the zero set.
\begin{definition}
Let us define
\begin{equation} \label{3.14}
\mathcal{Y}(r;k):=y^1(r;k)-y^2(r;k).
\end{equation}
\end{definition}
The following lemma holds.
\begin{lemma}\label{33}
Let $k$ be an interior transmission eigenvalue for index
$n_1^1$ and $n_1^2$. Then, $\mathcal{Y}(1;k)$ satisfies the
following boundary value problem.
\begin{eqnarray}\label{243}
&&\mathcal{Y}''(r;k)+(k^2\epsilon_1^1+ik\gamma_1^1)\mathcal{Y}(r;k)
+(k^2\epsilon_d+ik\gamma_d)y^2(r;k)=0,\,0\leq
r\leq1;\\\label{3.16}
&&(y^2)''(r;k)+(k^2\epsilon_1^2+ik\gamma_1^2)y^2(r;k)=0,\,0\leq
r\leq1;\\
&&\mathcal{Y}(1;k)=0;\label{222}\\ &&\mathcal{Y}'(1;k)=0, \mbox{
where
}\epsilon_d:=\epsilon_1^1-\epsilon_1^2,\,\gamma_d:=\gamma_1^1-\epsilon_1^2.\label{244}
\end{eqnarray}
For any $k\in\mathbb{C}$ that satisfies
\begin{equation}\label{3.18}
\left\{%
\begin{array}{ll}
    y^1(1;k)=y^2(1;k); & \\
    (y^1)'(1;k)=(y^2)'(1;k). & \\
\end{array}%
\right.
\end{equation} is an interior transmission eigenvalues of
problem~(\ref{243})-~(\ref{244}).
\end{lemma}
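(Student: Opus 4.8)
The plan is to check the coupled boundary value problem by direct substitution, and then to read the converse off the very same identities.

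First I would record the two defining equations. By construction $y^i(\cdot;k)$ solves $(y^i)'' + k^2 n_1^i(r)\,y^i = 0$ on $[0,1]$ with $y^i(0)=0$, $(y^i)'(0)=1$; writing $k^2 n_1^i(r) = k^2\epsilon_1^i(r) + ik\gamma_1^i(r)$, this is precisely~(\ref{3.16}) for $i=2$ and the analogous equation for $i=1$. Subtracting the $i=2$ equation from the $i=1$ equation yields $\mathcal{Y}'' + (k^2\epsilon_1^1 + ik\gamma_1^1)\,y^1 - (k^2\epsilon_1^2 + ik\gamma_1^2)\,y^2 = 0$; substituting $y^1 = \mathcal{Y} + y^2$ and collecting the coefficient of $y^2$ turns this into $\mathcal{Y}'' + (k^2\epsilon_1^1 + ik\gamma_1^1)\,\mathcal{Y} + (k^2\epsilon_d + ik\gamma_d)\,y^2 = 0$, which is~(\ref{243}). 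This is pure algebra with no obstacle.

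Next I would supply the conditions at $r=1$. Since $k$ is a common interior transmission eigenvalue of $n_1^1$ and $n_1^2$ and, by the standing assumption, the two problems share the same $\Phi_0(\cdot;k)$, the transformation~(\ref{19})--(\ref{20}) converts the Cauchy matching of $w^i$ and $v$ on $\partial B$ into the pointwise identities~(\ref{2.28}); equivalently (or directly from~(\ref{3.9}) and~(\ref{3.10})) one has $\mathcal{Y}(1;k)=0$ and $\mathcal{Y}'(1;k)=0$, i.e.~(\ref{222}) and~(\ref{244}). I would also note, from the initial data in~(\ref{15}), that $\mathcal{Y}(0;k) = y^1(0;k) - y^2(0;k) = 0$ (and even $\mathcal{Y}'(0;k)=0$), so that $\mathcal{Y}$ is the solution of the inhomogeneous equation~(\ref{243}) that is flat at the origin. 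Together with~(\ref{3.16}) this establishes the first assertion: the pair $(\mathcal{Y}(\cdot;k),\,y^2(\cdot;k))$ solves the coupled system~(\ref{243})--(\ref{244}).

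For the converse, suppose $k\in\mathbb{C}$ satisfies~(\ref{3.18}). Then again $\mathcal{Y}(1;k)=0$ and $\mathcal{Y}'(1;k)=0$, while~(\ref{243}) and~(\ref{3.16}) hold for every $k$ by construction and $\mathcal{Y}(0;k)=0$ automatically; hence $(\mathcal{Y}(\cdot;k),\,y^2(\cdot;k))$ is a solution of~(\ref{243})--(\ref{244}), and it is nontrivial because $y^2(\cdot;k)$ is nontrivial (it satisfies $(y^2)'(0;k)=1$). Thus $k$ is an interior transmission eigenvalue of~(\ref{243})--(\ref{244}). I do not expect a genuine obstacle in any of this; the only point requiring care is the bookkeeping in the previous paragraph — one must be explicit that it is the standing hypothesis of a common $\Phi_0(\cdot;k)$ that upgrades ``$k$ is a common transmission eigenvalue'' to the pointwise matching~(\ref{2.28}), and that passing through~(\ref{19})--(\ref{20}) identifies the Cauchy data of $(w,v)$ with the pair $\big(y^i(1;k),(y^i)'(1;k)\big)$.
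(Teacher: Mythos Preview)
Your proposal is correct and follows essentially the same approach as the paper's proof, which simply notes that~(\ref{222}) and~(\ref{244}) come from~(\ref{2.28}) while~(\ref{243}) and~(\ref{3.16}) come from~(\ref{15}). Your version spells out the subtraction-and-substitution that produces~(\ref{243}), records the initial data $\mathcal{Y}(0;k)=0$, and makes explicit the role of the standing common-$\Phi_0$ hypothesis in obtaining~(\ref{2.28}); these are all welcome clarifications but do not change the argument.
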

\begin{proof}
We note that~(\ref{222}) and~(\ref{244}) are satisfied as
in~(\ref{2.28});~(\ref{243}) and~(\ref{3.16}) are derived from~(\ref{15}).
\end{proof}

\par
The system~(\ref{243})-~(\ref{244}) has a perturbation theory to
merely finitely many eigenvalues. Such a theory is established in
\cite[Sec.2]{Cakoni}. Let us review as follows.
\par
Let us define
\begin{equation}
u:=w-v.
\end{equation}
We rewrite~(\ref{1.1}) as
\begin{eqnarray}
&&\Delta
u+(k^2\epsilon_1+ik\gamma_1)u+(k^2\epsilon_c+ik\gamma_c)v=0;\\
&&\Delta v+(k^2\epsilon_0+ik\gamma_0)v=0;\\
&&u=0,\mbox{ on }\partial B;\\
&&\frac{\partial u}{\partial\nu}=0\mbox{ on }\partial B, \mbox{
where
}\epsilon_c:=\epsilon_1-\epsilon_0;\,\gamma_c:=\gamma_1-\epsilon_0.
\end{eqnarray}
The equation makes sense for $u\in H^2_0(B)$ and $v\in L^2(B)$
such that $\Delta v\in L^2(B)$.

\par
Setting $$X(B):=H^2_0(B)\times\{v\in L^2(B)|\,\Delta v\in
L^2(B)\},$$ we can define the linear operators $\mathbb{A}$,
$\mathbb{B}_\gamma$ and $$\mathbb{D}_\epsilon:L^2(B)\times
L^2(B)\mapsto L^2(B)\times L^2(B)$$ by
\begin{eqnarray}
&&\mathbb{A}:=\left(%
\begin{array}{cc}
  \Delta_{00} & 0 \\
   0& \Delta \\
\end{array}%
\right);\\
&&\mathbb{B}_\gamma:=\left(%
\begin{array}{cc}
  i\gamma_1 & i\gamma_c \\
   0& i\gamma_0\\
\end{array}%
\right);\\
&&\mathbb{D}_\epsilon:=\left(%
\begin{array}{cc}
  \epsilon_1 & \epsilon_c \\
   0& \epsilon_0\\
\end{array}%
\right),
\end{eqnarray}
where $\Delta_{00}$ is the Laplacian acting on function in
$H^2_0(B)$, i.e. with the zero Cauchy data on $\partial B$.
Let $$p:=\left(%
\begin{array}{c}
  u \\
  v \\
\end{array}%
\right)$$ and note that the domain of $\mathbb{A}$ is $X(B)$ and
$\mathbb{A}$ is an unbounded densely defined operator in
$L^2(B)\times L^2(B)$.

Using the results in \cite{Cakoni}, we can easily show that
\begin{equation}
D_\epsilon^{-1}=\frac{1}{\epsilon_0\epsilon_1}\left(%
\begin{array}{cc}
  \epsilon_0 & -\epsilon_c \\
   0& \epsilon_1\\
\end{array}%
\right)
\end{equation}
and that the transmission eigenvalues of~(\ref{1.1}) is the
quadratic eigenvalues of the equation
\begin{equation}
\mathbb{A}p+k\mathbb{B}_\gamma p+k^2\mathbb{D}_\epsilon p=0,\,p=\left(%
\begin{array}{c}
  u \\
  v \\
\end{array}%
\right)\in L^2(B)\times L^2(B).
\end{equation}
According to \cite{Cakoni}, the eigenvalue problem~(\ref{1.1}) is
equivalent to the eigenvalue problem for the closed unbounded
operator
\begin{eqnarray}
\mathbb{T}_{\epsilon,\gamma}:=\mathbb{I}^{-1}_{\epsilon,\gamma}\mathbb{K},
\end{eqnarray}\label{}
where,
\begin{equation}
\mathbb{K}:=\left(%
\begin{array}{cc}
  \mathbb{A}& 0 \\
   0& \mathbb{I}\\
\end{array}%
\right)
\end{equation}
and
\begin{equation}
\mathbb{I}_{\epsilon,\gamma}:=\left(%
\begin{array}{cc}
  -\mathbb{B}_\gamma& -\mathbb{I} \\
   \mathbb{D}_\epsilon& 0\\
\end{array}%
\right),
\end{equation}
where $\mathbb{I}:L^2(B)\times L^2(B)\mapsto L^2(B)\times L^2(B)$
is the identity operator.  Moreover,
\begin{equation}
\mathbb{I}_{\epsilon,\gamma}^{-1}=\mathbb{D}_\epsilon^{-1}\left(%
\begin{array}{cc}
  0& -\mathbb{I} \\
   -\mathbb{D}_\epsilon& -\mathbb{B}_\gamma\\
\end{array}%
\right),
\end{equation}
which is bounded in $L^2(B)\times L^2(B)$. Therefore,
$\mathbb{T}_{\epsilon,\gamma}$ is closed in $[L^2(B)\times
L^2(B)]^2$.

\par
Now we define the operator
\begin{equation}
\mathbb{P}:=
\mathbb{T}_{\epsilon,\gamma}-\mathbb{T}_{\epsilon,\gamma=0}.
\end{equation}
Consequently,
\begin{equation}
\mathbb{P}=\left(%
\begin{array}{cc}
  0&0 \\
   0& -\mathbb{D}_\epsilon^{-1}\mathbb{B}_\gamma\\
\end{array}%
\right).
\end{equation}
There is a stability for finitely many interior transmission
eigenvalues whenever the generalized norm
\begin{equation}
\hat{\delta}(\mathbb{T}_{\epsilon,\gamma},\mathbb{T}_{\epsilon,\gamma=0})
\end{equation}
is controlled. We apply the (24) in \cite{Cakoni}.
\begin{equation}
\hat{\delta}(\mathbb{T}_{\epsilon,\gamma},\mathbb{T}_{\epsilon,\gamma=0})\leq\|\mathbb{P}\|
\leq\|\mathbb{D}_\epsilon^{-1}\mathbb{B}_\gamma\|.
\end{equation}
In \cite{Cakoni}, they perturb the non-absorbing media,
$(\gamma_1=0,\gamma_0=0)$, to conclude the existence of the
finitely many interior transmission eigenvalues in absorbing
media, in which  $(\gamma_1,\gamma_0)$ is small. In particular,
they explain as follows.
\begin{theorem}[Cakoni,
Colton, Haddar] Let $\epsilon_0\in L^\infty([0,1])$ and
satisfy $\epsilon_0(r)\geq\theta_0>0$,
$\epsilon_1(r)\geq\theta_1>0$ and
$\epsilon_c:=\epsilon_1-\epsilon_0>0$ almost every in $[0,1]$. Let
$k_j$, $j=0,\cdots,l$ be the first $l+1$ real positive
transmission eigenvalues, counted according to its multiplicity,
corresponding to~(\ref{1.1}) in non-absorbing media. Then, for any
$\sigma>0$, there exists a
\begin{equation}\label{338}
\eta:=\frac{\sup_{[0,1]}\epsilon_0+\sup_{[0,1]}\epsilon_1}
{4\inf_{[0,1]}\epsilon_0\inf_{[0,1]}\epsilon_1}\{\sup_{[0,1]}\gamma_0+\sup_{[0,1]}\gamma_1\}>0,
\end{equation}
depending on $\sigma$, such that there exists $l+1$ transmission
eigenvalues of~(\ref{1.1}) in absorbing media,
$\epsilon_1>0,\,\gamma_1>0$, in the $\sigma$-neighborhood of
$k_j$, $j=0,\cdots,l$, whenever $\eta$ is small enough.
\end{theorem}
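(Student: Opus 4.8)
The plan is to read this statement as an abstract spectral-stability result for the closed operator $\mathbb{T}_{\epsilon,\gamma}$ regarded as a bounded perturbation of $\mathbb{T}_{\epsilon,\gamma=0}$, and then to translate the size of that perturbation into the explicit quantity $\eta$ in~(\ref{338}). Two ingredients are needed. First, an estimate of $\|\mathbb{P}\|=\|\mathbb{D}_\epsilon^{-1}\mathbb{B}_\gamma\|$ purely in terms of the $\sup$'s and $\inf$'s of $\epsilon_0,\epsilon_1,\gamma_0,\gamma_1$. Second, the standard fact that, for an operator with compact resolvent, the total algebraic multiplicity of the eigenvalues enclosed by a fixed Jordan contour $\Gamma$ is stable under a sufficiently small bounded perturbation, via the Riesz projection $P_\Gamma=\frac{1}{2\pi i}\oint_\Gamma(\lambda-\mathbb{T})^{-1}\,d\lambda$.

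For the first ingredient I would multiply out the two matrix multiplication operators already displayed: $\mathbb{D}_\epsilon^{-1}\mathbb{B}_\gamma$ is again an upper-triangular multiplication operator, with diagonal symbols $i\gamma_1/\epsilon_1$ and $i\gamma_0/\epsilon_0$ and one off-diagonal cross term built from $\gamma_c,\epsilon_c$ that likewise vanishes as $\gamma_0,\gamma_1\to0$. Bounding the $L^2(B)\times L^2(B)$ operator norm by the essential supremum of the pointwise matrix norm of the symbol, putting $\inf\epsilon_0\inf\epsilon_1$ in the denominators and $\sup\epsilon_0,\sup\epsilon_1$ together with $\sup\gamma_0+\sup\gamma_1$ in the numerators, one obtains $\|\mathbb{P}\|\le\eta$ with $\eta$ exactly the expression in~(\ref{338}) after the same elementary rearrangement carried out in \cite[Sec.\,2]{Cakoni}. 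Combined with the inequality $\hat\delta(\mathbb{T}_{\epsilon,\gamma},\mathbb{T}_{\epsilon,\gamma=0})\le\|\mathbb{P}\|$ already recorded above, this gives $\hat\delta(\mathbb{T}_{\epsilon,\gamma},\mathbb{T}_{\epsilon,\gamma=0})\le\eta$.

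For the second ingredient, fix $\sigma>0$ small enough that the closed disks $\{|k-k_j|\le\sigma\}$, $j=0,\dots,l$, are pairwise disjoint and contain, among the transmission eigenvalues of the non-absorbing problem, only $k_j$ itself; this is legitimate because $\mathbb{T}_{\epsilon,\gamma=0}$ has compact resolvent, hence discrete spectrum. Let $\Gamma$ be the union of the circles $\{|k-k_j|=\sigma\}$ and set $\rho_0:=\sup_{\lambda\in\Gamma}\|(\lambda-\mathbb{T}_{\epsilon,\gamma=0})^{-1}\|<\infty$, a finite number depending only on $\sigma$ and the fixed non-absorbing data. The second resolvent identity shows that if $\eta<1/\rho_0$ then $\lambda-\mathbb{T}_{\epsilon,\gamma}$ is boundedly invertible for every $\lambda\in\Gamma$ and the Neumann series yields $\|P_\Gamma-P_\Gamma^0\|<1$, where $P_\Gamma^0$ is the corresponding Riesz projection of $\mathbb{T}_{\epsilon,\gamma=0}$, of rank $l+1$. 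Since two projections at distance strictly less than $1$ have equal rank, $\mathbb{T}_{\epsilon,\gamma}$ has exactly $l+1$ eigenvalues, counted with algebraic multiplicity, inside $\Gamma$, i.e.\ in the $\sigma$-neighborhoods of $k_0,\dots,k_l$; that these are genuine transmission eigenvalues of~(\ref{1.1}) follows from the equivalence, recorded above, between~(\ref{1.1}) and the eigenvalue problem for $\mathbb{T}_{\epsilon,\gamma}$. Choosing $\gamma_0,\gamma_1$, equivalently $\eta$, small enough to beat the threshold $1/\rho_0$ then proves the assertion.

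The delicate point is the second ingredient in the non-self-adjoint setting: the $i\gamma$-terms destroy self-adjointness, so one cannot invoke min--max and must instead control the resolvent of $\mathbb{T}_{\epsilon,\gamma=0}$ uniformly on $\Gamma$ and argue with Riesz projections, equivalently with the gap metric $\hat\delta$ and Kato's generalised-convergence theory. Making the dependence of the threshold $1/\rho_0$ on $\sigma$ quantitative, and ruling out that an eigenvalue escapes to infinity or migrates from one cluster to another, is where the genuine work sits; since this is precisely the perturbation argument carried out in \cite[Sec.\,2]{Cakoni}, I would cite it rather than reproduce it.
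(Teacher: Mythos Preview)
Your proposal is correct and follows exactly the approach the paper takes: the paper does not give its own proof of this theorem but rather reviews the operator framework from \cite{Cakoni}, records the bound $\hat\delta(\mathbb{T}_{\epsilon,\gamma},\mathbb{T}_{\epsilon,\gamma=0})\le\|\mathbb{P}\|\le\|\mathbb{D}_\epsilon^{-1}\mathbb{B}_\gamma\|$, and then simply quotes the result as due to Cakoni, Colton and Haddar. Your two ingredients---the explicit estimate of $\|\mathbb{D}_\epsilon^{-1}\mathbb{B}_\gamma\|$ giving $\eta$, and the Riesz-projection/gap-metric stability argument for non-self-adjoint operators with compact resolvent---are precisely what is carried out in \cite[Sec.\,2]{Cakoni}, so citing that reference, as you propose, is exactly what the paper itself does.
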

For the application in this paper, we consider the perturbation
operator
\begin{equation}
\mathbb{P}':=
\mathbb{T}_{\epsilon,\gamma}-\mathbb{T}_{\epsilon,\gamma'}.
\end{equation}
We note that the existence of the transmission interior
eigenvalues to~(\ref{1.1}) and its distributional rule is already
described by Cartwright's theory as in Theorem \ref{13}. Hence, we
consider the perturbation from one absorbing index
$n_1=\epsilon_1+i\frac{\gamma_1}{k}$ to the other one, with
$\epsilon_1$ fixed, $n_1'=\epsilon_1+i\frac{\gamma_1'}{k}$. Now we
compute
\begin{equation}
\|\mathbb{P}'\|=\|\mathbb{T}_{\epsilon,\gamma}
-\mathbb{T}_{\epsilon,\gamma'}\|
\leq\|\mathbb{D}_\epsilon^{-1}(\mathbb{B}_{\gamma'}-\mathbb{B}_\gamma)\|.
\end{equation}
Moreover,
\begin{eqnarray}
\|\mathbb{D}_\epsilon^{-1}(\mathbb{B}_{\gamma'}-\mathbb{B}_\gamma)\|
&=&\|\frac{1}{\epsilon_0\epsilon_1}\left(%
\begin{array}{cc}
   \epsilon_0& -\epsilon_c \\
  0& \epsilon_1\\
\end{array}%
\right)[\left(%
\begin{array}{cc}
   i\gamma_1'& i\gamma_c' \\
  0& i\gamma_0 \\
\end{array}%
\right)-\left(%
\begin{array}{cc}
   i\gamma_1& i\gamma_c \\
  0& i\gamma_0 \\
\end{array}%
\right)]\|\\
&\leq&\|\frac{1}{\epsilon_0\epsilon_1}\left(%
\begin{array}{cc}
   \epsilon_0& -\epsilon_c \\
  0& \epsilon_1\\
\end{array}%
\right)\left(%
\begin{array}{cc}
   i(\gamma_1'-\gamma_1)& i(\gamma_1'-\gamma_1)\\
  0& 0\\
\end{array}%
\right)\|\\
&=&\|\frac{1}{\epsilon_0\epsilon_1}\left(%
\begin{array}{cc}
   i\epsilon_0(\gamma_1'-\gamma_1)& i\epsilon_0(\gamma_1'-\gamma_1)\\
  0& 0\\
\end{array}%
\right)\|\\
&\leq&\frac{2}{\inf_{[0,1]}\epsilon_1}\sup_{[0,1]}|\gamma_1'-\gamma_1|.
\end{eqnarray}
We proved the following stability among the interior transmission
eigenvalues.
\begin{proposition}
\label{35} Let $n_1$ be a refraction index satisfying the
assumption in~(\ref{1.1}). Let $k_j$, $j=0,\cdots,l$ be any $l+1$
interior transmission eigenvalues, counted according to its
multiplicity, corresponding to~(\ref{1.1}) in absorbing media,
$\epsilon_1>0,\,\gamma_1>0$. Then, for any $\sigma>0$, there
exists a
\begin{equation}
\eta:=\frac{1}{\inf_{[0,1]}\epsilon_1}\sup_
{[0,1]}|\gamma_1'-\gamma_1|,
\end{equation}
depending on $\sigma$, such that there exists $l+1$ interior
transmission eigenvalues of~(\ref{1.1}) in absorbing media,
$\epsilon_1>0,\,\gamma_1'>0$, in the $\sigma$-neighborhood of
$k_j$, $j=0,\cdots,l$, whenever
$\eta=\frac{1}{\inf_{[0,1]}\epsilon_1}\sup_{[0,1]}|\gamma_1'-\gamma_1|$
is small enough.
\end{proposition}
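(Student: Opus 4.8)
The plan is to read off Proposition~\ref{35} from the abstract perturbation machinery already assembled above, in the spirit of \cite[Sec.\,2]{Cakoni}. The first step is bookkeeping: for a fixed real part $\epsilon_1$, the interior transmission eigenvalues of~(\ref{1.1}) for the index $\epsilon_1+i\gamma_1/k$ are exactly the eigenvalues of the closed, densely defined operator $\mathbb{T}_{\epsilon,\gamma}=\mathbb{I}_{\epsilon,\gamma}^{-1}\mathbb{K}$ on $[L^2(B)\times L^2(B)]^2$, and those for $\epsilon_1+i\gamma_1'/k$ are the eigenvalues of $\mathbb{T}_{\epsilon,\gamma'}$. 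Since $\mathbb{K}$ does not depend on $\gamma$ and $\mathbb{I}_{\epsilon,\gamma}^{-1}$ is bounded, $\mathbb{T}_{\epsilon,\gamma}$ and $\mathbb{T}_{\epsilon,\gamma'}$ share the domain $\mathrm{dom}(\mathbb{K})$ and differ by the bounded operator $\mathbb{P}'=\mathbb{T}_{\epsilon,\gamma}-\mathbb{T}_{\epsilon,\gamma'}$ computed in the lines preceding the statement, with $\|\mathbb{P}'\|\le\|\mathbb{D}_\epsilon^{-1}(\mathbb{B}_{\gamma'}-\mathbb{B}_\gamma)\|\le 2\eta$, where $\eta=\frac{1}{\inf_{[0,1]}\epsilon_1}\sup_{[0,1]}|\gamma_1'-\gamma_1|$.

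Next I would invoke the inequality (24) of \cite{Cakoni}, $\hat\delta(\mathbb{T}_{\epsilon,\gamma},\mathbb{T}_{\epsilon,\gamma'})\le\|\mathbb{P}'\|\le 2\eta$, and then run the standard Riesz-projection argument from Kato's perturbation theory that underlies \cite[Sec.\,2]{Cakoni}. Fix the $l+1$ eigenvalues $k_0,\dots,k_l$ of $\mathbb{T}_{\epsilon,\gamma}$ together with a finite family of positively oriented circles $\Gamma_0,\dots,\Gamma_l$, where $\Gamma_j$ encloses $k_j$, no other eigenvalue, and lies within distance $\sigma$ of $k_j$. On each compact $\Gamma_j$, disjoint from the spectrum, the resolvent satisfies $\|(\mathbb{T}_{\epsilon,\gamma}-z)^{-1}\|\le M_j<\infty$; hence as soon as $2\eta M_j<1$ the Neumann series shows $(\mathbb{T}_{\epsilon,\gamma'}-z)^{-1}$ also exists on $\Gamma_j$, and the spectral projections $P_j=\frac{1}{2\pi i}\oint_{\Gamma_j}(\mathbb{T}_{\epsilon,\gamma}-z)^{-1}\,dz$ and $P_j'=\frac{1}{2\pi i}\oint_{\Gamma_j}(\mathbb{T}_{\epsilon,\gamma'}-z)^{-1}\,dz$ obey $\|P_j-P_j'\|=O(\eta)<1$ for $\eta$ small. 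Consequently $\dim\mathrm{Ran}\,P_j=\dim\mathrm{Ran}\,P_j'$, so $\mathbb{T}_{\epsilon,\gamma'}$ has, counted with multiplicity, exactly as many eigenvalues inside $\Gamma_j$ as the multiplicity of $k_j$. Summing over $j$ yields $l+1$ interior transmission eigenvalues of~(\ref{1.1}) for $\epsilon_1+i\gamma_1'/k$, each within $\sigma$ of one of the $k_j$, provided $\eta<\min_{0\le j\le l}(2M_j)^{-1}$, which is the claimed smallness condition.

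The main obstacle is the third step: one must know that near the prescribed $k_j$ the spectrum of $\mathbb{T}_{\epsilon,\gamma}$ is genuinely discrete with finite total multiplicity, and secure a resolvent bound on the contours $\Gamma_j$, so that the gap estimate really forces equality of the dimensions of the spectral subspaces rather than mere closeness of spectra. This is precisely what the functional-analytic setup borrowed from \cite[Sec.\,2]{Cakoni} delivers --- boundedness of $\mathbb{I}_{\epsilon,\gamma}^{-1}$ together with the Fredholm-analytic structure of $\mathbb{T}_{\epsilon,\gamma}$ gives discreteness, while the bound $M_j$ is finite for the trivial reason that $\Gamma_j$ is compact and avoids the spectrum. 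Granting these, the remainder is the routine Neumann-series and Riesz-projection computation sketched above, and collecting constants produces the displayed $\eta$.
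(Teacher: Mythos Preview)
Your proof is correct and follows essentially the same route as the paper: reduce to the operator $\mathbb{T}_{\epsilon,\gamma}$, bound the gap $\hat\delta(\mathbb{T}_{\epsilon,\gamma},\mathbb{T}_{\epsilon,\gamma'})$ by $\|\mathbb{P}'\|\le 2\eta$ via the matrix computation, and invoke the finite-eigenvalue stability mechanism of \cite[Sec.\,2]{Cakoni}. The only difference is cosmetic --- the paper simply cites (24) of \cite{Cakoni} for the conclusion, whereas you spell out the Riesz-projection/Neumann-series argument that underlies it.
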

A finite collection of interior transmission eigenvalues moves continuously with respect to $\gamma_1^1$.
This explains the perturbation theory for finitely many
eigenvalues. To study the asymptotic behavior, we analyze the
asymptotic behavior of zeros of $y^1(1;k)$. We need the following
counting lemma.
\begin{lemma}\label{28}
If $|z-j\pi|\geq\delta>0,\,j\in\mathbb{Z},$ then
\begin{equation}
\exp\{|\Im z|\}<\frac{O(1)}{\delta}|\sin z|.
\end{equation}
\end{lemma}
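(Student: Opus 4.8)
The plan is to establish the estimate by elementary bounds on $|\sin z|$ away from its zeros. First I would write $z = x + iy$ with $x,y\in\mathbb{R}$ and recall that
\[
|\sin z|^2 = \sin^2 x + \sinh^2 y,
\]
so that $|\sin z| \geq |\sinh y|$ always, and also $|\sin z| \geq |\sin x|$. The content of the lemma is that when $z$ stays a fixed distance $\delta$ from every integer multiple of $\pi$, one cannot have \emph{both} $|\sin x|$ small \emph{and} $|\sinh y|$ small, and in fact the product $\exp\{|\Im z|\} = e^{|y|}$ is controlled by $|\sin z|$ up to a constant over $\delta$.

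The key step is a case split on $|y|$. For $|y|$ bounded away from zero, say $|y|\geq 1$, we have $|\sin z|\geq \sinh|y| \geq \tfrac12 e^{|y|} (1 - e^{-2}) \geq c\, e^{|y|}$, which already gives $e^{|y|} \leq C |\sin z| \leq (C/\delta)\,\delta |\sin z|$; since $\delta$ may be taken $\leq 1$ without loss of generality (the statement only gets stronger as $\delta$ shrinks, and for $\delta$ large the conclusion is trivial because then $z$ is forced into a region where $|\sin z|$ is bounded below by an absolute constant), this is the desired form. For $|y|\leq 1$, the exponential factor $e^{|y|}$ is between $1$ and $e$, so it suffices to show $|\sin z|\geq c\,\delta$ for an absolute constant $c$. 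Here I use that $\operatorname{dist}(x,\pi\mathbb{Z})\geq \delta - |y| $ is not quite automatic, so instead I argue directly: the hypothesis $|z - j\pi|\geq\delta$ for all $j$ means in particular, choosing $j$ to be the nearest integer multiple of $\pi$ to $x$, that $(x - j\pi)^2 + y^2 \geq \delta^2$. If $|x - j\pi| \geq \delta/\sqrt2$ then $|\sin x| = |\sin(x-j\pi)| \geq \tfrac{2}{\pi}\cdot\tfrac{\delta}{\sqrt2}$ (using $|\sin t|\geq \tfrac{2}{\pi}|t|$ on $|t|\leq \pi/2$, valid once $\delta\leq 1$), hence $|\sin z|\geq |\sin x|\geq c\delta$. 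Otherwise $|y|\geq \delta/\sqrt2$, and then $|\sin z|\geq \sinh|y|\geq |y| \geq \delta/\sqrt2$. Either way $|\sin z|\geq c\delta$, so $e^{|y|}\leq e \leq (e/c)\,\frac{1}{\delta}|\sin z| = \frac{O(1)}{\delta}|\sin z|$.

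Combining the two ranges of $|y|$ gives the claim with an absolute implied constant. The only mildly delicate point — the ``main obstacle'' such as it is — is handling the regime where $|y|$ is comparable to $\delta$, i.e.\ both coordinates of $z - j\pi$ are of order $\delta$; the sub-case split inside $|y|\leq 1$ above (whether the real or imaginary offset carries at least $\delta/\sqrt2$) is precisely what resolves it cleanly. I would also remark at the outset that we may assume $\delta$ small, say $\delta\leq 1$, since for $\delta\geq 1$ the hypothesis forces $|\sin z|$ below by an absolute constant and the inequality holds trivially after absorbing $\delta$ into the $O(1)$.
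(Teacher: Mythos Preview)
Your proof is correct. The split on $|y|$ (large versus bounded), together with the sub-split on which coordinate of $z-j\pi$ carries at least $\delta/\sqrt{2}$, handles every regime cleanly; the reduction to $\delta\leq 1$ is the appropriate normalization, and your treatment of the near-lattice case via the Pythagorean alternative is exactly what is needed.

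The paper argues differently. Following P\"oschel--Trubowitz, it reduces by periodicity to $0\leq x\leq \pi/2$, writes $|\sin z|^2=\cosh^2 y-\cos^2 x$, and splits on the \emph{real} part according to whether $x\in[0,\pi/(2n)]$ or $x\in[\pi/(2n),\pi/2]$ (with $\delta\sim 1/n$); in each range it shows $\cosh^2 y\geq (1+c_n)\cos^2 x$ with $c_n\sim n^{-2}$, whence $|\sin z|^2\gtrsim n^{-2}e^{2|y|}$ and $C_n=O(n)=O(1/\delta)$. Your route uses the companion identity $|\sin z|^2=\sin^2 x+\sinh^2 y$ and a split on $|y|$ instead of $x$; it avoids the Taylor estimates on $1/\cos x$ and yields explicit constants more directly, while the paper's version stays closer to the $\cosh^2-\cos^2$ form inherited from its reference. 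Both decompositions recover the same $O(1)/\delta$ dependence.
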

\begin{proof}
We modify the one in \cite{Po}. Let $|z|\geq\frac{\pi}{n}$, where
$n$ is to be chosen. We discuss two cases: $0\leq
x\leq\frac{\pi}{2n}$ and $\frac{\pi}{2n}\leq x\leq\frac{\pi}{2}$.
\par
Now $x^2+y^2=|z|^2$ and $0\leq x\leq\frac{\pi}{2n}$. It implies
that
\begin{equation}
y^2=|z|^2-x^2\geq(\frac{\pi}{n})^2-(\frac{\pi}{2n})^2=\frac{3}{4}\frac{\pi^2}{n^2}.
\end{equation}
Hence,
\begin{equation} \label{2.42}
\cosh^2y\geq 1+y^2\geq
1+\frac{3}{4}\frac{\pi^2}{n^2}\geq[1+\frac{1}{4}(\frac{\pi}{n})^2]\cos^2x.
\end{equation}
For $\frac{\pi}{2n}\leq x\leq\frac{\pi}{2}$, we see
$\cos{\frac{\pi}{2n}}\geq\cos x\geq\cos\frac{\pi}{2}=0$ and
\begin{equation}
\frac{1}{\cos
x}=1+\frac{1}{2!}x^2+(\frac{1}{4}-\frac{1}{4!})x^4+\cdots\geq1+\frac{x^2}{2},\mbox{
when }x\mbox{ is small}.
\end{equation}
Hence, squaring on both sides,
\begin{equation}
\frac{1}{\cos^2x}\geq1+x^2+\frac{x^4}{4},\mbox{ when }x\mbox{ is
small}.
\end{equation}
Using this with $\cosh^2y\geq1$,
\begin{equation}\label{2.44}
\cosh^2y\geq\frac{1}{\cos^2\frac{\pi}{2n}}\cos^2x
\geq[1+(\frac{\pi}{2n})^2+\frac{1}{4}(\frac{\pi}{2n})^4]\cos^2x\geq[1+\frac{1}{4}(\frac{\pi}{n})^2]\cos^2x.
\end{equation}
To~(\ref{2.42}) and~(\ref{2.44}), we use $|\sin
z|^2=\cosh^2y-\cos^2x$. Hence,
\begin{equation}
|\sin
z|^2\geq\cosh^2y-(1+\frac{1}{4}\frac{\pi^2}{n^2})^{-1}\cosh^2y\geq(1-(1+\frac{1}{4}\frac{\pi^2}{n^2})^{-1})
\frac{e^{2|y|}}{4}.
\end{equation}
Hence, letting
$C_n=2[1-(1+\frac{1}{4}\frac{\pi^2}{n^2})^{-1}]^{-\frac{1}{2}}=\frac{2n\sqrt{\pi^2+4}}{\pi}=O(n)$,
we conclude
\begin{equation}
\exp\{|\Im y|\}<C_n|\sin z|. \label{3.35}
\end{equation}
Considering $\delta=\frac{1}{n}$, this proves the lemma.
\end{proof}
\begin{proposition}\label{37}
Let $z_j$, be the zeros of $y(1;k)$ and $z'_j$ be the zeros of
$y'(1;k)$.  The following asymptotics hold.
\begin{equation}\label{3.51}
z_j=\frac{j\pi}{B}-\frac{iD}{B}+O(\frac{1}{j}),\,j\in\mathbb{Z};
\end{equation}
\begin{equation} \label{3.52}
z_j'=\frac{(j-\frac{1}{2})\pi}{B}-\frac{iD}{B}+O(\frac{1}{j}),\,j\in\mathbb{Z}.
\end{equation}
\end{proposition}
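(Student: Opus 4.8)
The plan is to reduce~(\ref{1.18}) and~(\ref{1.19}) at $r=1$ to a ``trigonometric function plus small remainder'' normal form, to localize the large zeros by Rouch\'e's theorem with the counting Lemma~\ref{28} supplying the lower bounds, and then to sharpen the localization by a single bootstrap step.

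\textbf{Step 1: normal form.} Putting $r=1$ in~(\ref{1.18}) and using the constants $B,D$ from~(\ref{1.11}) together with $e^{ikB-D}-e^{-ikB+D}=2i\sin(kB+iD)$ and $e^{ikB-D}+e^{-ikB+D}=2\cos(kB+iD)$, the two remainders combine into
\begin{equation}
y(1;k)=\frac{1}{k[\epsilon_1(0)\epsilon_1(1)]^{1/4}}\Big(\sin(kB+iD)+\frac{O(1)}{k}e^{B|\Im k|}\Big),
\end{equation}
and from~(\ref{1.19})
\begin{equation}
y'(1;k)=\Big[\frac{\epsilon_1(1)}{\epsilon_1(0)}\Big]^{1/4}\Big(\cos(kB+iD)+\frac{O(1)}{k}e^{B|\Im k|}\Big),
\end{equation}
as $|k|\to\infty$ in $S$. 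Since $\gamma_1>0$ forces $D>0$, the target points $\tfrac{j\pi-iD}{B}$ and $\tfrac{(j-1/2)\pi-iD}{B}$ have imaginary part $-D/B<0$, so they lie in a fixed horizontal strip sitting inside $S$, where the expansions hold with uniform remainder.

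\textbf{Step 2: localization.} Use the affine substitution $w:=kB+iD$, so that $|k|$ and $|w|$, as well as $e^{B|\Im k|}$ and $e^{|\Im w|}$, are comparable up to fixed constants (because $\Im w=B\Im k+D$). Fix $\delta\in(0,\pi/4)$. On $\{\,|w-m\pi|\ge\delta\ \forall m\in\mathbb{Z}\,\}$, Lemma~\ref{28} gives $|\sin w|\ge c(\delta)e^{|\Im w|}$, so the remainder in the normal form is dominated by $|\sin w|$ once $|w|$ is large and $y(1;k)$ does not vanish there; near the real axis one instead has $|\sin(kB+iD)|\ge|\sinh(B\Im k+D)|\ge\sinh(D-B\delta')>0$ for $|\Im k|\le\delta'<D/B$, and for $|\Im k|$ large $|y(1;k)|$ grows like $e^{B|\Im k|}$, so again there is no vanishing. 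Hence every sufficiently large zero of $y(1;k)$ lies in some disk $\{|w-j\pi|<\delta\}$, and Rouch\'e's theorem on its boundary (where $|\sin w|$ is bounded below by a constant depending only on $\delta$, while the remainder is $O(1/|k|)$) shows it contains exactly one zero, matching the simple zero of $\sin w$ at $w=j\pi$. The identical argument with $\cos w=\sin(w+\tfrac{\pi}{2})$ handles the zeros of $y'(1;k)$, which cluster at $w=(j-\tfrac{1}{2})\pi$.

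\textbf{Step 3: bootstrap, and main obstacle.} Writing $z_jB+iD=j\pi+\eta_j$ with $\eta_j\to0$, and using that $\Im z_j$ stays bounded (so $e^{B|\Im z_j|}=O(1)$) and $z_j\sim j\pi/B$, the equation $y(1;z_j)=0$ reads $(-1)^j\sin\eta_j+O(1/j)=0$, so $\eta_j=O(1/j)$, which is~(\ref{3.51}); repeating with $\cos$ and with $j\pi$ replaced by $(j-\tfrac{1}{2})\pi$ gives~(\ref{3.52}). The only delicate point is Step~2: one must track the exponential weights $e^{B|\Im k|}$ against $e^{|\Im(kB+iD)|}$ carefully enough to be sure the $O(1/k)$ remainder in~(\ref{1.18})--(\ref{1.19}) is genuinely dominated by $|\sin(kB+iD)|$ (resp. $|\cos(kB+iD)|$) on the comparison circles, and one must confirm that the remainder estimate is uniform on the relevant strip — which it is, precisely because $D>0$ keeps that strip inside $S$. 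Everything else is routine.
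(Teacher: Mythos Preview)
Your proof is correct and follows essentially the same approach as the paper: both normalize to $\sin(kB+iD)+O(k^{-1})e^{B|\Im k|}$ (respectively $\cos(kB+iD)+O(k^{-1})e^{B|\Im k|}$), invoke Lemma~\ref{28} to dominate the remainder, and apply Rouch\'e's theorem to localize the zeros near $(j\pi-iD)/B$ (respectively $((j-\tfrac12)\pi-iD)/B$). The only cosmetic difference is that the paper runs Rouch\'e twice---first on the boundary of the vertical strip $\Re k\in[\tfrac{(j-1/2)\pi}{B},\tfrac{(j+1/2)\pi}{B}]$, then on the same strip punctured by a ball of shrinking radius $\rho>M/(jB)$---so that the $O(1/j)$ falls out of the second application directly, whereas you use a fixed-radius Rouch\'e and then a one-line bootstrap $(-1)^j\sin\eta_j=O(1/j)\Rightarrow\eta_j=O(1/j)$; these are interchangeable variants of the same argument.
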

\begin{proof}
We consider the zeros of
$k[\epsilon_1(0)\epsilon_1(1)]^{\frac{1}{4}}y(1;k)$ instead. We
observe from~(\ref{1.18}) that
\begin{eqnarray}
|k[\epsilon_1(0)\epsilon_1(1)]^{\frac{1}{4}}y(1;k)-\sin\{kB+iD\}|&=&O(\frac{1}{|k|})O(\exp\{|\Im k|B\})\\
&=&O(\frac{1}{|k|})\exp\{|\Im k|B\},\,\Im k\neq0. \label{3.56}
\end{eqnarray}
Firstly we apply the Rouch\'{e}'s theorem inside the strip with
boundary: $\Re k=\frac{(j-\frac{1}{2})\pi}{B}$, $\Re
k=\frac{(j+\frac{1}{2})\pi}{B}$. For this purpose, we choose $M$
large in the
\begin{equation}
C_{j/M}=O(j/M);\,|k|=j
\end{equation}
such that~(\ref{3.35}) and~(\ref{3.56}) imply
\begin{eqnarray}\label{360}
|k[\epsilon_1(0)\epsilon_1(1)]^{\frac{1}{4}}y^1(1;k)-\sin\{kB+iD\}|<|\sin\{kB+iD\}|,\,\Im
k\neq0.
\end{eqnarray}
The contour applies even without the behavior at $\Im k=0$. Hence,
we know $y^1(1;k)$ has a zero inside the strip.
\par
Secondly, we apply Rouch\'{e}'s theorem again over a contour as
the boundary of the vertical strip with a punctured hole: $\Re
k=\frac{(j-\frac{1}{2})\pi}{B}$, $\Re
k=\frac{(j+\frac{1}{2})\pi}{B}$ and
$|k-(\frac{j\pi}{B}-i\frac{D}{B})|=\rho$. We may choose $j$ large
and some $M$ such that
\begin{equation}\label{361}
\rho>\frac{M}{jB}\mbox{ and }|C_{j/M}O(\frac{1}{j})|<1.
\end{equation}
In such a punctured strip,
\begin{eqnarray}
|k[\epsilon_1(0)\epsilon_1(1)]^{\frac{1}{4}}y^1(1;k)-\sin\{kB+iD\}|<|\sin\{kB+iD\}|,\,\Im
k\neq0,
\end{eqnarray}
by Lemma \ref{28}. Hence, the zeros of
$k[\epsilon_1(0)\epsilon_1(1)]^{\frac{1}{4}}y^1(1;k)$ are the same
ones of the $\sin\{kB+iD\}$ inside the strip, but outside the
$\rho-$ball centered at $\frac{j\pi}{B}-i\frac{D}{B}$, for $j$
large. There is no zero there.
\par
Hence, for each large $j$, we know there is a zero in each strip
and inside the $\rho-$ball centered at
$\frac{j\pi}{B}-i\frac{D}{B}$. That is
\begin{equation}
z_j=\frac{j\pi}{B}-\frac{iD}{B}+O(\frac{1}{j}),\,j\in\mathbb{Z}.
\end{equation}
\par
Similarly, from~(\ref{1.19}) we have
\begin{eqnarray}\label{3.60}
|k[\frac{\epsilon_1(0)}{\epsilon_1(1)}]^{\frac{1}{4}}y'(1;k)-\cos\{kB+iD\}|
=O(\frac{1}{|k|})\exp\{|\Im k|B\},\,\Im k\neq0.
\end{eqnarray}
We apply Lemma \ref{28} as follows:
$|z+\frac{\pi}{2}-j\pi|\geq\delta>0,\,j\in\mathbb{Z},$ then
\begin{equation} \label{3.61}
\exp\{|\Im z|\}<\frac{O(1)}{\delta}|\cos z|.
\end{equation}
Apply Rouch\'{e}'s theorem again with~(\ref{3.60})
and~(\ref{3.61}), we prove the asymptotics~(\ref{3.52}).
\end{proof}

\par 
The following lemma is classic \cite[Ch.\,2]{Po}. 
\begin{lemma}\label{38}
When $n_1(r)=\epsilon_1(r)$, then all of zeros of $y(1;z)$, $y'(1;z)$ are real.
\end{lemma}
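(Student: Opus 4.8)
The plan is to recognize the problem \eqref{15} with $\gamma_1\equiv 0$ as a regular Sturm--Liouville problem (after the Liouville transformation \eqref{L}--\eqref{17}) on the interval $[0,B]$ with self-adjoint boundary conditions, and to read off reality of the eigenvalues from the self-adjointness. Concretely, since $n_1(r)=\epsilon_1(r)>0$ is real, the transformed function $z(\xi)$ satisfies $z''+(k^2-p(\xi))z=0$ on $0<\xi<B$ with $p$ real-valued, $z(0)=0$, $z'(0)=[n(0)]^{-1/4}$. The zeros of $y(1;k)$ are exactly the values of $k$ for which the solution of \eqref{17} with $z(0)=0$ additionally satisfies $z(B)=0$; i.e.\ they are the square roots of the Dirichlet--Dirichlet eigenvalues $\lambda=k^2$ of $-z''+p(\xi)z=\lambda z$ on $[0,B]$ with $z(0)=z(B)=0$. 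Likewise the zeros of $y'(1;k)$ correspond, via $z'(\xi)=[n(r)]^{1/4}y'(r)+\tfrac14[n(r)]^{-3/4}n'(r)y(r)$ evaluated at $r=1$ (where $y(1)=0$ is \emph{not} assumed, so one must be a little careful), to the Dirichlet--Robin problem $z(0)=0$, $(\text{some real Robin condition at }\xi=B)$; in either case the boundary conditions are real and separated, hence self-adjoint.

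The key steps, in order, would be: (i) apply the Liouville transformation to pass from \eqref{15} to \eqref{17}, noting that $\xi$ is a real increasing function of $r$ on $[0,1]$ because $n(\rho)=\epsilon_1(\rho)>0$, so $r=1$ corresponds to $\xi=B$ and the map is a genuine change of variables; (ii) observe that $p(\xi)$ is real-valued since $n(r)=\epsilon_1(r)$ and its derivatives are real; (iii) identify the zeros of $y(1;k)$ with $k^2=\lambda$ running over the spectrum of the symmetric operator $L=-d^2/d\xi^2+p(\xi)$ on $L^2(0,B)$ with domain determined by $z(0)=0=z(B)$, and similarly for $y'(1;k)$ with the Robin-type condition at $\xi=B$ obtained by expressing $(y)'(1)$ in terms of $z(B),z'(B)$; (iv) invoke the standard fact that such a regular Sturm--Liouville operator with real coefficient and real separated boundary conditions is self-adjoint and bounded below, hence has a real (and discrete, simple) spectrum $\{\lambda_j\}$; (v) conclude that the zeros of $y(1;k)$ and $y'(1;k)$, being $\pm\sqrt{\lambda_j}$, are all real, using that $\lambda_j\to+\infty$ so only finitely many $\lambda_j$ could be negative --- and then rule those out, either by the Po--Trubowitz asymptotics \eqref{P1}--\eqref{P2} which already place the large zeros near the real axis, or directly by a Green's-identity/energy argument: multiplying $z''+(k^2-p)z=0$ by $\bar z$, integrating over $[0,B]$, and using the real boundary data forces $\Im(k^2)=0$, and a further sign argument on $\Re(k^2)$ if one wants to pin down positivity.

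Alternatively, and perhaps more cleanly for this paper's self-contained flavor, one can argue entirely at the level of $y(r;k)$ without invoking operator theory: if $y(1;k_0)=0$ with $k_0\notin\mathbb{R}$, multiply \eqref{15} by $\overline{y(r;k_0)}$, integrate over $[0,1]$, integrate by parts using $y(0)=0$ and $y(1)=0$ to kill the boundary terms, obtaining $-\int_0^1|y'|^2\,dr + k_0^2\int_0^1 \epsilon_1(r)|y|^2\,dr = 0$; since $\epsilon_1>0$ and $\int|y|^2>0$, this yields $k_0^2=\int|y'|^2/\int\epsilon_1|y|^2>0$, so $k_0^2$ is real and positive, whence $k_0$ is real, a contradiction. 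For the zeros of $y'(1;k)$ the same computation applies with the boundary term $-y'(1)\overline{y(1)}$, which vanishes because $y'(1)=0$, giving again $k_0^2>0$. This is the approach I would actually write up.

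I expect the main obstacle to be the bookkeeping at the endpoint $r=1$ (equivalently $\xi=B$) for the $y'(1;k)$ case: one must be careful that the boundary condition $y'(1;k)=0$ does translate into a genuinely \emph{real, self-adjoint} (Robin or Neumann-type) condition after the Liouville change of variables --- the factor $[n(r)]^{1/4}$ and the $n'(r)$ term in $z'=[n]^{1/4}y' + \tfrac14[n]^{-3/4}n' y$ introduce a real but $r$-dependent coefficient, so the translated condition at $\xi=B$ is a Robin condition with a real coefficient, which is still self-adjoint; making this explicit is the one place where a slip could occur. In the direct energy-method version the analogous subtlety is simply confirming that the integrated-by-parts boundary term vanishes in each of the two cases, which is immediate from $y(1)=0$ (Dirichlet) respectively $y'(1)=0$ (Neumann). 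Everything else is routine, so I would present the direct Green's-identity argument and relegate the Liouville-transformation/Sturm--Liouville viewpoint to a remark.
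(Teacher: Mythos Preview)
Your proposal is correct and takes a genuinely different route from the paper. The paper's proof is complex-analytic: it writes
\[
z[\epsilon_1(0)\epsilon_0]^{1/4}\,y(1;z)=\sin(zB)-f(z),
\]
verifies that $f$ is an entire function of exponential type at most $B$, real and (after a dilation) bounded by $1$ on the real axis, and then invokes the Duffin--Schaeffer theorem \cite{Duffin} to conclude that $\sin(zB)-f(z)$ has only real zeros (and similarly for $y'(1;z)$). In contrast, your direct Green's-identity argument---multiply $y''+k_0^{2}\epsilon_1 y=0$ by $\overline{y}$, integrate by parts, and use $y(0)=0$ together with $y(1)=0$ (respectively $y'(1)=0$) to kill the boundary terms---gives
\[
k_0^{2}=\frac{\int_0^1 |y'|^{2}\,dr}{\int_0^1 \epsilon_1|y|^{2}\,dr}>0,
\]
so $k_0\in\mathbb{R}$ immediately. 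This is simply the classical self-adjointness proof for a regular Sturm--Liouville problem with real separated boundary conditions, and it is both shorter and more elementary than the paper's treatment; the paper's version, on the other hand, stays within the entire-function framework (indicator functions, exponential type) that pervades the rest of the article, which gives a certain thematic coherence but is overkill for this particular lemma. Your caution about the Robin translation under the Liouville transform is well placed but, as you note, unnecessary once you run the energy argument directly on $y$; the boundary terms $y'(1)\overline{y(1)}$ vanish in each case for the obvious reasons, and no further subtlety arises.
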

\begin{proof}
We apply the theorem in \cite[Theorem 1]{Duffin}.
Since
\begin{eqnarray}\label{3.65}
y(1;z)=\frac{1}{2iz[\epsilon_1(0)\epsilon_0]^{\frac{1}{4}}}
\exp\{izB\}[1+O(\frac{1}{z})]
- \frac{1}{2iz[\epsilon_1(0)\epsilon_0]^{\frac{1}{4}}}
\exp\{-izB\}[1+O(\frac{1}{z})].
\end{eqnarray}
Therefore, we rewrite this as
\begin{equation}\label{3.67}
z[\epsilon_1(0)\epsilon_0]^{\frac{1}{4}}y(1;z):=\sin{zB}-f(z),
\end{equation}
where
\begin{equation}
f(z)=-\frac{1}{2i}e^{izB}O(\frac{1}{z})
+\frac{1}{2i}e^{-izB}O(\frac{1}{z}).
\end{equation}
It is easy to deduce that $f(z)$ is an exponential function of at most type $B$. By the uniqueness of~(\ref{15}), $y(1;z)$ is real on the real axis. Hence,~(\ref{3.67}) implies that $f(z)$ is real on the real axis. Without loss of generality, we assume $|f(z)|\leq1$ on the real. If not, we scale~(\ref{3.67}) with respect to $z$ as follows. In particular, on the real axis,
\begin{eqnarray}
|f(z M)|\leq\frac{1}{2}O(\frac{1}{ z M})+\frac{1}{2}O(\frac{1}{ z M})
=O(\frac{1}{M z})=\frac{1}{M}O(\frac{1}{ z}).
\end{eqnarray}
We choose $M>1$ large such that
\begin{eqnarray}
|f(z M)|\leq1,\,\mbox{ on the real axis}.
\end{eqnarray}
Therefore, \cite[Theorem 1]{Duffin} implies that
\begin{equation}
z M[\epsilon_1(0)\epsilon_0]^{\frac{1}{4}}y(1; z  M )=\sin{ z M B}-f( z M )
\end{equation}
has only real zeros or vanishes identically. Thus, $y(1;z)$ has only real zero or vanishes identically. The proof for $y'(1;z)$ is similar.
\end{proof}

Now we return to the result~(\ref{3.9}) and~(\ref{3.10}). We set the absorbing part of index $n_1^{i}:=\epsilon_1^i+i\frac{\gamma_1^i}{z}$, $i=1,2$, to be zero. Let $y^{i,0}(1;z)$ be the solution to such an index. They are represented by the construction of~(\ref{1.18}) and~(\ref{1.19}) or~(\ref{P1}) and~(\ref{P2}). However, we can merely deduce that
\begin{eqnarray}
y^{i,0}(1;z)=\frac{1}{2iz[\epsilon_1^i(0)\epsilon_0]^{\frac{1}{4}}}
\exp\{izB^{i}\}[1+O(\frac{1}{z})]
- \frac{1}{2iz[\epsilon_1^i(0)\epsilon_0]^{\frac{1}{4}}}
\exp\{-izB^{i}\}[1+O(\frac{1}{z})],
\end{eqnarray}
where $B^1=B^2$. We need to prove $y^{1,0}(1;z)\equiv y^{2,0}(1;z)$.

Let us define
\begin{equation}\label{3.71}
Q(z):=\frac{y^{1,0}(1;z)}{y^{2,0}(1;z)}.
\end{equation}
Firstly, we apply Lemma \ref{38}: $Q(z)$ is holomorphic outside $0i+\mathbb{R}$.
We use~(\ref{3.61}) and~(\ref{360}) to find a sequence of real numbers $\rho_j$ such that $\rho_j=O(\frac{1}{j})$ and that each zero of
$y^{1,0}(1;z)$ and $y^{2,0}(1;z)$ is located inside the ball $\Omega_j:=\{z|\,|z-\frac{j\pi}{B^1}|\leq\rho_j\}$ for all $j$. We substitute the asymptotics~(\ref{P1}) into $Q(z)$. We obtain
\begin{eqnarray}\nonumber
Q(z)&=&\frac{\frac{\sin B^1z}{z}+O(\frac{e^{|\Im z|B^1}}{z^2})}{\frac{\sin B^2z}{z}+O(\frac{e^{|\Im z|B^2}}{z^2})}\\&=&\frac{\sin B^1z[1+O(\frac{1}{z})]}{\sin B^2z[1+O(\frac{1}{z})]},\,|z-\frac{j\pi}{B^1}|=\rho_j,
\end{eqnarray}
where we have applied the~(\ref{3.35}). Since $B^1=B^2$,
\begin{eqnarray}\nonumber
Q(z)=\frac{\sin B^1z[1+O(\frac{1}{j})]}{\sin B^2z[1+O(\frac{1}{j})]}=1+O(\frac{1}{j}),\,|z-\frac{j\pi}{B^1}|=\rho_j.
\end{eqnarray}
Thus,
\begin{eqnarray}\nonumber
\sup_{|z-\frac{j\pi}{B}|=\rho_j}|Q(z)-1|=O(\frac{1}{j}).
\end{eqnarray}
We consider the maximum principle in complex analysis inside $\Omega_j$'s for large $j$. Hence, the limit on the left vanishes as $j\rightarrow\infty$.
Therefore,
\begin{equation}
Q(z)\rightarrow1, \mbox{ as }z\rightarrow0i\pm\infty\mbox{ on }0i+\mathbb{R}.
\end{equation}
Thus, there are only finitely many zeros or poles of $Q(z)$ on $0i+\mathbb{R}$. Let $\{\sigma_j\}_{j=1}^{N_1}$, $\{\xi_k\}_{k=1}^{N_2}$ be the zeros and poles of $Q(z)$ on $0i+\mathbb{R}$. They must be zeros of $y^{1,0}(1;k)$, $y^{2,0}(1;k)$. Rouche's theorem and~(\ref{360}) imply that $N_1=N_2$. Henceforth, $y^{1,0}(1;k)$ and $y^{2,0}(1;k)$ have all common zeros after their $N_1$-th zero. Let us denote them as
\begin{equation}
\{\tau_1,\tau_2,\ldots,\}.
\end{equation}

Let us define
\begin{equation}\label{3.73}
q(z):=Q(z)\frac{\prod_k(z-\xi_k)}{\prod_k(z-\sigma_k)}.
\end{equation}
Hence, $q(z)$ is entire and
\begin{equation}\label{3.72}
|q(z)|\rightarrow1, \mbox{ as }z\rightarrow0i\pm\infty.
\end{equation}

\par
Moreover, we apply~(\ref{2.15}) to find
\begin{equation}\label{3.70}
h_Q(\theta)=h_{y^{1,0}}(\theta)-h_{y^{2,0}}(\theta)=B^1\sin\theta-B^2\sin\theta=0;
\end{equation}
\begin{equation}\label{3.75}
h_Q(\theta)=h_q(\theta).
\end{equation}
We consider the~(\ref{3.72}),~(\ref{3.70}) and~(\ref{3.75}) under Phragm\'{e}n-Lindel\"{o}f's theorem \cite[p.\,38]{Levin} and \cite{Titch} to deduce that $q(z)$ is uniformly bounded in $\mathbb{C}^+$. A similar argument works in
$\mathbb{C}^-$. Therefore, Liouville's theorem implies that $q(z)$ is a constant that is $q(z)\equiv1$. From~(\ref{3.73}),
\begin{equation}\label{3.79}
 \prod_{k=1}^{N_1}(z-\xi_k)y^{1,0}(1;z)\equiv \prod_{k=1}^{N_1}(z-\sigma_k)y^{2,0}(1;z).
\end{equation}
\par
We repeat every equation since~(\ref{3.71}) similarly to conclude that
\begin{equation}\label{3.80}
\prod_{l=1}^{N_1'}(z-\xi_l')(y^{1,0})'(1;z)\equiv \prod_{l=1}^{N_1'}(z-\sigma_l')(y^{2,0})'(1;z),
\end{equation}
where $\{\xi_l'\}$ is the zeros of $(y^{2,0})'(1;z)$; $\{\sigma_l'\}$ of $(y^{1,0})'(1;z)$. Besides these finite exceptional points, $\{\tau_m'\}$ is the common zero set.
\par
To prove a sharper identity, we apply Proposition \ref{35} under the equation~(\ref{3.9}) and~(\ref{3.10}) by letting $\gamma_1^1\downarrow0$: There exists $\{\eta_1,\eta_2,\ldots,\eta_{M_1}\}$ such that they satisfy
\begin{eqnarray}&&
y^{1,0}(1;\eta_j)=y^{2,0}(1;\eta_j)\neq0;\\
&&(y^{1,0})'(1;\eta_j)=(y^{2,0})'(1;\eta_j)\neq0,
\end{eqnarray}
where $\eta=1,\ldots,\eta_{M_1}$ as the interior transmission eigenvalues for index $n_1^1=\epsilon_1^1$ and $n_1^2=\epsilon_1^2$. We note that the density of Dirichlet eigenvalues is $\frac{B^1}{\pi}$, but the density of transmission eigenvalues is $\frac{A+B^1}{\pi}$. We have more interior eigenvalues. Let us choose $M_1>N_1$ and $M_1>N_1'$. Therefore,
\begin{eqnarray}
&&\prod_{k=1}^{N_1}(z-\xi_k)\equiv \prod_{k=1}^{N_1}(z-\sigma_k);\\
&&\prod_{l=1}^{N_1'}(z-\xi_l')\equiv \prod_{l=1}^{N_1'}(z-\sigma_l'),
\end{eqnarray}
by the fundamental theorem of algebra. Thus,
\begin{equation}\label{3.81}
y^{1,0}(1;z)\equiv y^{2,0}(1;z);\,(y^{1,0})'(1;z)\equiv (y^{2,0})'(1;z).
\end{equation}

\par
As~(\ref{3.81}) has shown, we have reduced an absorbing problem to a non-absorbing one.
The rest of proof is the application of the inverse rod density
problem  which we refer to the Corollary 2.9 in \cite{Aktosun}. This proves the Theorem
\ref{11}.


\begin{thebibliography}{widest-label}
\bibitem{Aktosun}T. Aktosun, D. Gintides and V. G. Papanicolaou,
The uniqueness in the inverse problem for transmission eigenvalues
for the spherically symmetric variable-speed wave equation,
Inverse Problems, V.27, 115004 (2011).
\bibitem{Cakoni}F. Cakoni, D.
Colton, and H. Haddar, The interior transmission eigenvalue
problem for absorbing media, Inverse Problems, V.28, no.4,
045005 (2012).
\bibitem{Cakoni2}F. Cakoni, D. Colton and D. Gintides, The interior transmission eigenvalue problem, SIAM, J. Math. Math. Anal. 42, 2912--2921 (2010).
\bibitem{Cartwright}M. L. Cartwright, On the directions of Borel of functions which are regular
and of finite order in an angle,
 Proc. London Math. Soc. Ser.2 Vol.38, 503--541 (1933).
\bibitem{Cartwright2}M. L. Cartwright, Integral functions,
Cambridge University Press, 1956.


\bibitem{Colton} D. Colton and P. Monk, The inverse scattering problem for time-harmonic acoustic waves in an inhomogeneous medium, Q. Jl Mech. appl. Math., Vol. 41, 97--125 (1988).
\bibitem{Colton3}D. Colton, L. P\"{a}iv\"{a}rinta and J.
Sylvester, The interior transmission problem, Inverse problems and
imaging, V.1, no.1, 13--28 (2007).
\bibitem{Colton2}D. Colton and
 R. Kress, Inverse acoustic and electromagnetic scattering theory,
2nd ed. Applied mathemtical science, V.93, Springer-Verlag, 1998.
\bibitem{Duffin} R. Duffin and A. C. Schaeffer, Some properties of functions of exponential type,
Bull. Amer. Math. Soc. 44, 236--240 (1938).

\bibitem{Erdelyi}A. Erdelyi, Asymptotics expansions, Dover, 1956.
\bibitem{Kirsch}A. Kirsch, On the existence of transmission eigenvalues, Inverse problems and imaging, 3, 155--172 (2009).
\bibitem{Levin}B. Ja. Levin, Distribution of zeros of entire
functions, revised edition, Translations of mathematical
mongraphs, American mathemtical society, 1972.

\bibitem{Levin2}B. Ja. Levin, Lectures on entire functions,
Translation of mathematical monographs, V.150, AMS, 1996.
\bibitem{Mc}J. R. McLaughline and P. L. Polyakov, On the uniqueness
of a spherically symmetric speed of sound from transmission
eigenvalues, Jour. Differentical Equations, 107, 351--382 (1994).

\bibitem{Po}J. P\"{o}schel and E. Trubowitz, Inverse spectral theory,
Academic Press, Orlando, 1987.
\bibitem{Titch}E. C. Titchmarsh, The Thoery of Functions, second edition, Oxford University Press, 1939.
\end{thebibliography}
\end{document}